\newcommand{\weq}{\ = \ }
\newcommand{\wle}{\ \le \ }
\newtheorem{definition}{Definition}[section]
\newtheorem{theorem}{Theorem}[section]
\newtheorem{proposition}{Proposition}[section]
\newtheorem{corollary}{Corollary}[section]
\newtheorem{lemma}{Lemma}[section]
\newtheorem{assumption}{Assumption}[section]
\newtheorem{example}{Example}[section]
\def\N{{\rm I\kern-0.16em N}}
\def\R{{\rm I\kern-0.16em R}}
\def\E{{\rm I\kern-0.16em E}}
\def\Pro{{\rm I\kern-0.16em P}}
\def\F{{\rm I\kern-0.16em F}}
\def\B{{\rm I\kern-0.16em B}}
\def\C{{\rm I\kern-0.46em C}}
\def\G{{\rm I\kern-0.50em G}}
\numberwithin{equation}{section}
\font\eka=cmex10
\def\ind{\mathrel{\hbox{\rlap{%
\hbox to 7.5pt{\hrulefill}}\raise6.6pt\hbox{\eka\char'167}}}}
\begin{document}
\title{\textbf{Stochastic Differential Equations with Discontinuous Diffusions}}
\date{\today}

\renewcommand{\thefootnote}{\fnsymbol{footnote}}

\author{Soledad Torres\footnotemark[1] \, and \, Lauri Viitasaari\footnotemark[2]}

\footnotetext[1]{Universidad de Valpara\'iso, Facultad de Ingenier\'ia, CIMFAV, Chile, {\tt soledad.torres@uv.cl}.}

\footnotetext[2]{Aalto University School of Business, Department of Information and Service Management, Finland (\textbf{Corresponding author}), {\tt lauri.viitasaari@iki.fi}.}

\maketitle

\begin{abstract}
We study one-dimensional stochastic differential equations of form $dX_t = \sigma(X_t)dY_t$, where $Y$ is a suitable H\"older continuous driver such as the fractional Brownian motion $B^H$ with $H>\frac12$. The innovative aspect of the present paper lies in the assumptions on diffusion coefficients $\sigma$ for which we assume very mild conditions. In particular, we allow $\sigma$ to have discontinuities, and as such our results can be applied to study equations with discontinuous diffusions. 
\end{abstract}

\noindent {\bf Keywords}: Stochastic differential equation, Fractional calculus, H\"older continuity, Discontinuity, Bounded variation

\noindent{\bf MSC 2010: 65C30 (Primary); 60H05, 60G22, 26A33 (Secondary)}

\section{Introduction}
The theory of stochastic differential equations (SDEs) is rather well-established whenever coefficients are smooth enough. In order to ensure the existence and uniqueness of solutions to SDEs, most of the assumptions used for the coefficients are related to Lipschitz continuity and/or linear growth. Only very few cases have been studied under more general conditions, especially, when dealing with discontinuous or singular coefficients. 
Nakao \cite{nakao72} proved pathwise uniqueness of solutions to SDEs driven by Brownian motion, assuming diffusion coefficient to be uniformly positive and of bounded variation on compact intervals. Later Engelbert and Schmidt \cite{ensc85-1,ensc85} proved an existence of weak solution to the SDE 
\begin{equation}\label{brow}
d X_t = \sigma(X_t)dB_t
\end{equation}
driven by Brownian motion, where $\sigma$ is a general real-valued measurable function satisfying $\int \frac{1}{\sigma^2(s)} ds < \infty $. However, the uniqueness in law fails in general. In 1983 Le Gall \cite{legall83} extended the results of Nakao 
by proving existence and uniqueness provided that $\sigma$ is bounded below away from zero, and satisfies 
$ |\sigma(x) - \sigma(y)|^ 2 \le  |g(x) - g(y)|$ for some increasing and bounded function $g$.
In \cite{bch05}, the authors studied the existence of strong and positive solutions and pathwise uniqueness in the case  $\sigma(x)=|x|^{\alpha}$, where $\alpha\in(0,1)$. 

Existence and uniqueness of strong solutions of SDEs driven by fractional Brownian motion under Lipschitz and linear growth conditions was proved by Nualart and R$\check{a}\text{\c{s}}$canu \cite{Nualart_Rascanu_2002}. After this seminal paper, such equations are studied by many authors (see, e.g. references in a monograph \cite{Mish}). In the case of the fractional Brownian motion, articles studying discontinuous coefficients are extremely rare. For an SDE 
\begin{equation}\label{sde}
X_t = X_0 + \int_0^t b(s,X_s) ds + \int_0^t ¡\sigma(s,X_s) dB^H_s
\end{equation}
with discontinuities in the drift $b$, we can mention papers \cite{BO} and \cite{MN}. In \cite{BO} the authors proved existence of a weak solution to \eqref{sde} in a case $\sigma \equiv 1$ and $b(s,X_s) = b_1(s, X_s) + b_2(s, X_s)$, where $b_1(s, x)$  is a H\"older continuous function of order strictly larger than $1 - \frac{1}{2H}$  in $x$ and strictly larger than $H- 1/2$ in $t$, and $b_2$ is a real bounded nondecreasing left- or right-continuous function. Similarly in \cite{MN}, the authors applied Girsanov theorem to prove existence of weak solutions in the case of constant $\sigma$ and discontinuous $b$. Finally, we mention \cite{nualart-et-al} where the Lipschitz continuity in $\sigma$ was relaxed. In \cite{nualart-et-al} the authors studied existence of solutions, in a case where $\sigma$ belongs to a class of functions including $\sigma(x)=|x|^\gamma$ with $\gamma \in (0,1)$ as a prototype. 

While the case of discontinuous drift $b$ is studied in the above mentioned articles, to the best of our knowledge there exists only one article by Garz\'on et al. \cite{soledad-et-al} where $\sigma$ in \eqref{sde} is allowed to be discontinuous. In \cite{soledad-et-al} the authors proved existence and uniqueness for a particular equation
\begin{equation}\label{DISC}
X_t = X_0 + \int_0^t \sigma (X_s) d B^H_s \quad , \quad t \geq 0;
\end{equation}
where $\sigma$ is the discontinuous function given by
\begin{equation}\label{sigma1}
\sigma (x) = \frac{1}{\alpha} 1\{x \geq 0\} + \frac{1}{1-\alpha} 1 \{x < 0\}, \ \alpha \in \left(0,\frac12\right).
\end{equation}

In this article we will study  existence and uniqueness for the SDE
\begin{equation}\label{DISC-1}
dX_t = \sigma(X_t) dY_t,
\end{equation}
where $\sigma(x)$ is a general function of locally bounded variation and satisfies $\sigma(x)\geq 0$ or $\sigma(x)\leq 0$, and $Y$ is a H\"older continuous process of order $\alpha>\frac12$, satisfying certain sufficient variability assumption (see Assumption \ref{assumption:key}). Possible driving forces $Y$ include, among others, fractional Brownian motions with $H>\frac12$ and the Rosenblatt process. In particular, our results generalises the results provided in \cite{nualart-et-al} and \cite{soledad-et-al}. To the best of our knowledge, this article is the first attempt towards general theory of one-dimensional SDEs driven by H\"older continuous forces, where we allow discontinuities for $\sigma$. Our results are based on a recent integration theory developed in \cite{chen-et-al}. 

The rest of the paper is organised as follows. In Section \ref{sec:SDE} we state and discuss our main results. In Section \ref{sec:fractional} we recall some basic facts on fractional derivatives and generalised Lebesgue-Stieltjes integrals, and in Section \ref{sec:integration} we build up the integration theory that we need to prove our main results. Section \ref{sec:proofs} is devoted to the proofs of our main results.

\section{Stochastic differential equations with discontinuous coefficients}
\label{sec:SDE}
In this article we consider stochastic differential equations of form
\begin{equation}
\label{eq:SDE}
dX_t = \sigma(X_t)dY_t
\end{equation}
with some (possibly random) initial condition $X_0$ and suitable driving force $Y$ that is H\"older continuous of order $\alpha > \frac12$. The innovative aspect of the present paper lies in the assumptions on the coefficient $\sigma$, that we allow to contain discontinuities. More precisely, throughout the paper we assume merely that $\sigma\geq 0$ ($\leq 0$, respectively) and $\sigma$ is of locally bounded variation such that $\frac{1}{\sigma}$ is locally integrable. This means that we also allow jump-type discontinuities for $\sigma$, which makes the analysis of \eqref{eq:SDE} rather difficult.

Our existence and uniqueness result is based on the Laplace method. Since $\frac{1}{\sigma}$ is locally integrable, the function
\begin{equation}
\label{eq:lambda}
\Lambda(x) = \int_a^x \frac{1}{\sigma(y)}dy
\end{equation}
is well-defined. Furthermore, since $\sigma$ is non-negative, the function $\Lambda$ is increasing, and thus the inverse $\Lambda^{-1}$ exists. We will show that this gives us a solution candidate $\Lambda^{-1}(Y_t + \Lambda(X_0)-Y_0)$. This is in line with the classical results for Lipschitz diffusions $\sigma$. Indeed, since $\alpha>\frac12$, we have
\begin{equation}
\label{eq:ito-intro}
f(Y_t) = f(Y_0) + \int_0^t f'(Y_u)d Y_u
\end{equation}
for all smooth functions $f$. Now if $\sigma$ is Lipschitz, then one easily obtains from the formula above that $\Lambda^{-1}(Y_t)$ is a solution to \eqref{eq:SDE}. Moreover, the uniqueness can be obtained by a certain fixed point argument. However, in our case the following questions arise;
\begin{enumerate}
\item Does the integral $\int_0^t \sigma(\Lambda^{-1}(Y_t))dY_t$ exist for arbitrary bounded variation function $\sigma$?
\item Does Equation \eqref{eq:ito-intro} hold?
\item In what sense the solution is unique?
\end{enumerate} 
Albeit easily stated, the above mentioned questions are rather subtle in the presence of jumps in $\sigma$. First of all, the existence of the pathwise integral is far from clear as usually the discontinuities of $\sigma$ imply that $\sigma(X_t)$ behaves rather badly even when $X_t$ is nice enough. For example, if $\sigma$ has a discontinuity at $x=0$, then $\sigma(X_t)$ can be of bounded $p$-variation for some $p$ only if $X$ crosses the zero-level finitely many times. The latter condition is not satisfied by many interesting random processes such as Brownian motions or fractional Brownian motions. Instead, they both have uncountably many crossings of the zero-level. Secondly, one cannot apply a fixed point argument in a straightforward manner to obtain uniqueness, which follows again from the bad behaviour of $\sigma(X_t)$. In contrast, in the Lipschitz case $\sigma(X_t)$ is H\"older continuous of the same order as $X$, which then can be used to derive some estimates. 

The key to handle bad coefficients $\sigma$ is to compensate its bad behaviour by variability of the driving force $Y$. The heuristic argument is that, while $\sigma$ may have discontinuities, the process $Y$ do not spend time around these points so that discontinuities of $\sigma$ can be handled. This heuristic is encoded into the following assumption.
\begin{assumption}
\label{assumption:key}
Let $\alpha \in \left(\frac12,1\right)$ be such that $X$ is $\alpha$-H\"older continuous. We assume that 
there exists $\beta\in(1-\alpha,\alpha)$ and $\epsilon>0$ such that 
\begin{equation}
\label{eq:sufficient-variability}
\sup_{y\in\R}\E\int_0^T |X_t-y|^{-\frac{\beta+\epsilon}{\alpha}}dt < \infty.
\end{equation}
\end{assumption}
\begin{example}
\label{example:density}
If $X_t$ has a density function $p_t(y)$ Lebesgue almost everywhere that satisfies
\begin{equation}
\label{eq:density}
\sup_{y\in\R} p_t(y) \in L^1([0,T]),
\end{equation}
then \eqref{eq:sufficient-variability} is automatically valid (cf. \cite{chen-et-al}). This class is already very large, and includes many interesting examples. For example, all Gaussian process $X$ with variance function $V(t)$ satisfying $[V(t)]^{-\frac12} \in L^1([0,T])$ belong to this class. In particular, fractional Brownian motion $B^H$ belongs to this class, and thus $B^H$ with $H>\frac12$ satisfies Assumption \ref{assumption:key}. Another interesting example satisfying Assumption \ref{assumption:key} is the Rosenblatt process $Z^H$ of order $H>\frac12$ (see, e.g. \cite{veil-taqqu} and references therein for basic properties of this process). To the best of our knowledge, SDEs driven by the Rosenblatt process are not extensively studied in the literature. Finally, any stationary process with bounded density function satisfies \eqref{assumption:key}. These examples should convince the reader that the class of possible driving noises in \eqref{eq:SDE} is considerably large. For more interesting examples, we refer to Subsection 3.4 of \cite{chen-et-al}.  
\end{example}
For the coefficient $\sigma$ we make the following assumption.
\begin{assumption}
\label{assu:sigma}
We suppose that $\sigma$ is of locally bounded variation. Moreover, we assume that $\sigma(x) \geq 0$ ($\leq 0$, respectively) for all $x \in \R$ and that $\frac{1}{\sigma}$ is locally integrable.
\end{assumption}
The following existence result is the first main theorem of the present paper.
\begin{theorem}
\label{thm:existence}
Suppose that $Z_t = \Lambda(X_0)+Y_t-Y_0$ satisfies Assumption \ref{assumption:key} and that $\sigma$ satisfies Assumption \ref{assu:sigma}. Then \eqref{eq:SDE} admits a solution that is given by $X_t=\Lambda^{-1}(\Lambda(X_0)+Y_t-Y_0)$, where $\Lambda^{-1}$ is the inverse of $\Lambda$ given by \eqref{eq:lambda}.
\end{theorem}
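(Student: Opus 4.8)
The plan is to verify directly that the candidate $X_t=\Lambda^{-1}(Z_t)$, with $Z_t=\Lambda(X_0)+Y_t-Y_0$, solves \eqref{eq:SDE} by applying a change-of-variables formula of It\^o type to the function $g:=\Lambda^{-1}$ and the process $Z$; observe first that $X$ is automatically continuous and adapted, being a fixed deterministic function of $X_0$ and $Y_t$, so only the integral identity (and the meaning of its right-hand side) needs attention. I would begin by recording the analytic properties of $\Lambda$ and $g$. Since $\frac1\sigma$ is locally integrable it is finite a.e., so $\sigma>0$ a.e., and hence $\Lambda$ in \eqref{eq:lambda} is absolutely continuous and strictly increasing with $\Lambda'=\frac1\sigma>0$ a.e. A standard real-analysis fact (the condition $\Lambda'>0$ a.e.\ being exactly what rules out a singular inverse) then yields that $g=\Lambda^{-1}$ is itself absolutely continuous, with $g'(z)=\sigma(g(z))$ for a.e.\ $z$. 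The key point is that this a.e.-derivative admits a \emph{locally bounded variation} representative: since $g$ is increasing and continuous, the total variation of $\sigma\circ g$ over an interval $[\Lambda(c),\Lambda(d)]$ equals that of $\sigma$ over $[c,d]$, which is finite by Assumption \ref{assu:sigma}. In particular $g$ is locally Lipschitz with a locally bounded variation derivative, so that $X=g(Z)$ inherits the $\alpha$-H\"older continuity of $Z$ on $[0,T]$.

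Next I would invoke the integration theory and the associated It\^o-type formula built in Section \ref{sec:integration} on top of \cite{chen-et-al}. Since $Z$ satisfies Assumption \ref{assumption:key} and $g'=\sigma\circ g$ is of locally bounded variation, that machinery guarantees both that the integral $\int_0^t g'(Z_s)\,dY_s$ is well-defined — the variability of $Z$ near every level compensating for the jumps of $g'$ — and that
\[
g(Z_t)\weq g(Z_0)+\int_0^t g'(Z_s)\,dZ_s .
\]
Since $Z-Y$ equals the $\mathcal F_0$-measurable constant $\Lambda(X_0)-Y_0$, one has $\int_0^t g'(Z_s)\,dZ_s=\int_0^t g'(Z_s)\,dY_s$. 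It then remains only to simplify: $g(Z_0)=\Lambda^{-1}(\Lambda(X_0))=X_0$ and $g'(Z_s)=\sigma(g(Z_s))=\sigma(X_s)$, so the displayed identity becomes $X_t=X_0+\int_0^t\sigma(X_s)\,dY_s$, which is exactly \eqref{eq:SDE} — the integral $\int_0^t\sigma(X_s)\,dY_s$ being, by construction, the well-defined integral $\int_0^t g'(Z_s)\,dY_s$.

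I expect the main obstacle to lie in the second step, namely having at hand a change-of-variables formula valid for the genuinely non-smooth $g$: its derivative $\sigma\circ g$ may possess infinitely many jumps, so the classical identity \eqref{eq:ito-intro} does not apply as stated, and it is precisely Assumption \ref{assumption:key} on $Z$ that simultaneously makes $\int_0^t g'(Z_s)\,dY_s$ meaningful and the formula valid; supplying this is the role of Section \ref{sec:integration}. A secondary point to be dealt with along the way is that $\Lambda^{-1}$ must be defined on the range of $Z$ over $[0,T]$; this is automatic when $\Lambda$ is a bijection of $\R$ (for instance when $\sigma$ is bounded), and otherwise the statement is to be read up to the first exit time of $Z$ from the range of $\Lambda$.
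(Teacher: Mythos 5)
Your proposal is correct and follows essentially the same route as the paper: show that $[\Lambda^{-1}]'=\sigma\circ\Lambda^{-1}$ is of locally bounded variation (the paper cites Josephy's criterion for composition with the increasing map $\Lambda^{-1}$, whereas you compute the variation of the composition directly --- the two arguments are equivalent), and then apply the It\^o-type formula of Theorem \ref{the:Ito} to $\Lambda^{-1}$ and the process $Z$, which satisfies Assumption \ref{assumption:key} by hypothesis. There is no substantive difference in approach.
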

As the coefficient $\sigma$ has rather bad behaviour, one cannot expect general uniqueness result. However, we can provide the following partial answer. 
\begin{theorem}
\label{thm:uniqueness}
Suppose $\sigma$ satisfies Assumption \ref{assu:sigma} and let $X$ be an arbitrary solution to SDE \eqref{eq:SDE} satisfying Assumption \ref{assumption:key}. 
Set 
$$
\tau = \inf\{t\geq 0:\sigma(X_t) = 0\}.
$$
Then $\tau$ is uniquely defined, and the solution $X$ is unique on $[0,\tau]$. In particular, if $\sigma(x)\neq 0$ for all $x$, the solution $X$ is unique in the class of processes satisfying Assumption \ref{assumption:key}.
\end{theorem}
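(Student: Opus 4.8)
The plan is to exploit the representation $X_t = \Lambda^{-1}(Z_t)$ with $Z_t = \Lambda(X_0)+Y_t-Y_0$ established (up to the existence theorem) in Theorem \ref{thm:existence}, and to run the argument in two regimes separated by the stopping time $\tau$. The first and conceptually cleanest step is to show that \emph{any} solution $X$ satisfying Assumption \ref{assumption:key} must coincide with $\Lambda^{-1}(Z_t)$ on $[0,\tau]$. For this I would argue that on $\{t : \sigma(X_t) \neq 0\}$ the process $\Lambda(X_t)$ is well-defined, and that the integration theory of \cite{chen-et-al} (together with the chain/It\^o-type formula \eqref{eq:ito-intro} extended to $\Lambda$, which is valid because $\Lambda$ is absolutely continuous with derivative $1/\sigma$ and Assumption \ref{assumption:key} controls the occupation measure near any bad point) gives $d\Lambda(X_t) = \Lambda'(X_t)\,dX_t = \frac{1}{\sigma(X_t)}\sigma(X_t)\,dY_t = dY_t$ as long as $t \le \tau$. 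Integrating yields $\Lambda(X_t) = \Lambda(X_0) + Y_t - Y_0 = Z_t$, hence $X_t = \Lambda^{-1}(Z_t)$ on $[0,\tau]$. A technical point here is that $\Lambda$ need not be strictly increasing if $1/\sigma$ vanishes on a set of positive measure, but $1/\sigma$ is locally integrable and positive wherever $\sigma<\infty$, so $\Lambda$ is strictly increasing on the region where $\sigma$ is finite; one should phrase the inversion carefully, perhaps restricting to the effective range of $Z$.

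Next I would show $\tau$ is uniquely defined. Given that every admissible solution equals $\Lambda^{-1}(Z_t)$ on $[0,\tau]$, and that $\sigma(X_t)=\sigma(\Lambda^{-1}(Z_t))$ is then a deterministic functional of the path of $Z$ (which itself does not depend on the choice of solution, since $Z_t=\Lambda(X_0)+Y_t-Y_0$ and $X_0$, $Y$ are fixed data), the event $\{\sigma(X_t)=0\}$ and hence $\tau = \inf\{t\ge 0:\sigma(\Lambda^{-1}(Z_t))=0\}$ is the same for all solutions. One must check the small subtlety that the definition $\tau=\inf\{t:\sigma(X_t)=0\}$ a priori references $X$ on all of $[0,T]$, not just $[0,\tau]$; but by a standard argument (the first hitting time of $0$ by $\sigma(X_\cdot)$ depends only on the path up to that hitting time) this circularity is harmless, and $\tau$ is determined by $Z$ alone.

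The final claim, uniqueness in the class of admissible solutions when $\sigma$ never vanishes, is then immediate: in that case $\tau=+\infty$ (or rather $\tau > T$), so the uniqueness on $[0,\tau]$ covers the whole interval $[0,T]$, and every admissible solution equals the explicit process $\Lambda^{-1}(\Lambda(X_0)+Y_t-Y_0)$ furnished by Theorem \ref{thm:existence}. I would also remark that the hypothesis ``$X$ satisfies Assumption \ref{assumption:key}'' is exactly what licenses the integration-by-substitution step above, since without sufficient variability of $X$ near the discontinuity set of $\sigma$ the integral $\int_0^t \frac{1}{\sigma(X_s)}\,\sigma(X_s)\,dY_s$ cannot be manipulated termwise.

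I expect the main obstacle to be the rigorous justification of the change-of-variables identity $d\Lambda(X_t) = dY_t$ up to time $\tau$ when $\Lambda$ is merely absolutely continuous and $\sigma$ has jumps: one cannot invoke a classical It\^o or Young chain rule, and instead must appeal to the pathwise integration theory from Section \ref{sec:integration}, showing both that $\int_0^t \Lambda'(X_s)\,\sigma(X_s)\,dY_s$ exists as an object in that theory and that it equals $\int_0^t dY_s$. The delicate interplay is that $\Lambda'(X_s)\sigma(X_s)$ equals $1$ only Lebesgue-a.e.\ in $s$ (it may fail precisely where $X_s$ lands on a null set on which $\sigma$ or $1/\sigma$ misbehaves), so one needs Assumption \ref{assumption:key} to guarantee that the occupation time of $X$ in such exceptional sets is zero and hence the two integrals agree. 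Everything else — the invertibility of $\Lambda$, the determination of $\tau$ by $Z$, and the passage to the globally-nonvanishing case — should follow routinely once this identity is in hand.
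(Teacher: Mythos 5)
Your high-level plan is the same as the paper's: establish the identity $\Lambda(X_t)-\Lambda(X_0)=Y_t-Y_0$ for an arbitrary admissible solution, invert $\Lambda$ to conclude pathwise agreement of any two solutions, and deduce that $\tau$ is determined by the data. You also correctly locate the crux in the change-of-variables step. The problem is that you leave that crux unresolved, and the route you sketch for it does not go through. The paper's It\^o-type formula (Theorem \ref{the:Ito}) requires $f'$ to be of \emph{locally finite variation}; with $f=\Lambda$ one has $f'=1/\sigma$, which is in general neither locally bounded nor of locally bounded variation when $\sigma$ approaches $0$. Localizing to $[0,\tau]$ does not repair this: for $t<\tau$ one only has $\sigma(X_t)>0$, and $\sigma(X_t)\to 0$ as $t\to\tau$ is possible, so $1/\sigma(X_t)$ is unbounded on $[0,\tau)$ and the integrand $\frac{1}{\sigma(X_s)}$ cannot be controlled there. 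Your suggestion that Assumption \ref{assumption:key} handles this via an occupation-time argument ("$\Lambda'(X_s)\sigma(X_s)=1$ only a.e.") addresses a different, much milder issue and does not produce the chain rule for the merely absolutely continuous $\Lambda$.

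What is actually needed, and what the paper supplies, is a three-part mechanism. First, one introduces the stopping times $\tau_\epsilon=\inf\{t:\sigma(X_t)\le\epsilon\}$ so that $1/\sigma(X_s)\le\epsilon^{-1}$ on $[0,\tau_\epsilon]$, and recovers $\tau$ as the limit $\tau_\epsilon\downarrow\tau$ at the very end. Second, one replaces $\sigma$ by smooth approximations $\sigma_n\ge\sigma$ built from the Jordan decomposition (Lemma \ref{lemma:approx}), so that $1/\sigma_n\le 1/\sigma$ remains locally integrable and $\Lambda_n(x)=\int_0^x\sigma_n^{-1}(y)\,dy$ is Lipschitz; the classical chain rule then applies to $\Lambda_n$, and the substitution $dX_s=\sigma(X_s)\,dY_s$ inside the resulting integral is justified by a Riemann--Stieltjes sum argument (Proposition \ref{prop:lambda_n-rep}), yielding $\Lambda_n(X_t)=\Lambda_n(X_0)+\int_0^t\frac{\sigma(X_s)}{\sigma_n(X_s)}\,dY_s$ on $[0,\tau_\epsilon]$. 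Third, the passage $n\to\infty$ to obtain $\Lambda(X_t)-\Lambda(X_0)=Y_t-Y_0$ requires convergence of $\bigl[\frac{\sigma(X)}{\sigma_n(X)}-1\bigr]_{\theta,1}\to 0$, which the paper gets from a uniform-integrability lemma (Lemma \ref{lemma:UI}) combined with the Gagliardo-seminorm estimates of Proposition \ref{prop:key-estimate} and Proposition \ref{prop:key-estimate-approx}. None of these steps is present in your proposal, and without them the central identity is not established. (A minor further point: your worry that $\Lambda$ may fail to be strictly increasing is unfounded, since $\sigma$ of locally bounded variation is locally bounded, hence $1/\sigma>0$ everywhere and $\Lambda$ is strictly increasing.)
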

By combining these two theorems we obtain the following two corollaries. \begin{corollary}
\label{cor:uniqueness}
Let $\sigma$ satisfy Assumption \ref{assu:sigma} and suppose that $Z_t=\Lambda^{-1}(\Lambda(X_0)+Y_t-Y_0)$ satisfies Assumption \ref{assumption:key}. Then it is the unique solution to \eqref{eq:SDE} on $[0,\tau]$, where 
$
\tau = \inf\{t\geq 0:\sigma(Z_t)=0\}.
$
\end{corollary}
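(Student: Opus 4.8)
The plan is to derive the Corollary by assembling Theorem \ref{thm:existence} and Theorem \ref{thm:uniqueness}, the only genuine work being to reconcile the two variability hypotheses, which are imposed on different (though closely related) processes. Throughout I write $\tilde Z_t = \Lambda(X_0)+Y_t-Y_0$ for the ``inner'' process appearing in Theorem \ref{thm:existence} and $Z_t = \Lambda^{-1}(\tilde Z_t)$ for the solution candidate of the Corollary, so that $Z_0 = X_0$ and $\Lambda(Z_t)-\Lambda(Z_0) = Y_t - Y_0$. Note that Theorem \ref{thm:existence} requires Assumption \ref{assumption:key} for $\tilde Z$, whereas the Corollary assumes it for $Z$ and Theorem \ref{thm:uniqueness} uses it for the solution; my first task is to bridge these.

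I would first establish existence, i.e. that $Z$ solves \eqref{eq:SDE}, by invoking Theorem \ref{thm:existence} after transferring the variability from $Z$ to $\tilde Z$. Since $\Lambda$ is increasing with $\Lambda'=1/\sigma$, the mean value theorem gives, for every $w\in\R$, a point $\xi_t$ between $Z_t$ and $w$ with $|\tilde Z_t-\Lambda(w)| = \sigma(\xi_t)^{-1}|Z_t-w|$, whence $|\tilde Z_t-\Lambda(w)|^{-(\beta+\epsilon)/\alpha} = \sigma(\xi_t)^{(\beta+\epsilon)/\alpha}\,|Z_t-w|^{-(\beta+\epsilon)/\alpha}$. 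Because $\sigma$ is of locally bounded variation (Assumption \ref{assu:sigma}) it is locally bounded, so on the relevant bounded range the factor $\sigma(\xi_t)^{(\beta+\epsilon)/\alpha}$ is dominated by a constant; points $z$ outside the range of $\Lambda$ contribute only a trivially bounded integral, since there $|\tilde Z_t-z|$ is bounded away from zero. Applying $\E\int_0^T(\cdot)\,dt$ and taking the supremum over $w$ (equivalently over $z=\Lambda(w)$) then shows that $\tilde Z$ inherits \eqref{eq:sufficient-variability} with the same $\alpha,\beta,\epsilon$, where $\tilde Z$ is $\alpha$-H\"older as a shift of $Y$. Theorem \ref{thm:existence} now yields that $Z$ is a solution of \eqref{eq:SDE}.

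For uniqueness I would apply Theorem \ref{thm:uniqueness} directly with $X=Z$: by hypothesis $Z$ satisfies Assumption \ref{assumption:key}, and by the previous paragraph $Z$ is a solution. The theorem then gives that $\tau = \inf\{t\ge 0 : \sigma(Z_t)=0\}$ is uniquely defined and that the solution is unique on $[0,\tau]$. In particular any other solution satisfying Assumption \ref{assumption:key} coincides with $Z$ on $[0,\tau]$ and shares the same $\tau$, so $Z$ is the unique solution on $[0,\tau]$, which is exactly the assertion of the Corollary.

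The main obstacle is the hypothesis transfer in the second step, specifically the uniformity of the bound on $\sigma(\xi_t)$: local boundedness controls $\sigma$ only on compacts, so one must argue that the values of $Z_t$ and $w$ that matter for the supremum stay in a fixed bounded region (or treat the tails separately), and one must also account for the possibility that $\Lambda(\R)$ is a proper subinterval of $\R$, which is tied to the distinction between global and local existence of $Z$. Once this is handled, the remainder is a direct citation of the two main theorems.
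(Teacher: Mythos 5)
Your uniqueness step is exactly the paper's (a direct citation of Theorem \ref{thm:uniqueness} applied to $Z$), but your existence step takes a different route that contains a genuine gap: the paper never transfers Assumption \ref{assumption:key} from $Z$ to the inner process $\tilde Z_t=\Lambda(X_0)+Y_t-Y_0$, and in fact that transfer is false in general, not merely technically delicate. Two things go wrong. First, the factor $\sigma(\xi_t)^{(\beta+\epsilon)/\alpha}$ in your mean-value argument is evaluated at a point $\xi_t$ between $Z_t$ and $w$, where $w$ ranges over all of $\R$ and the range of $Z_\cdot(\omega)$ is bounded only $\omega$-wise; local boundedness of $\sigma$ therefore yields only a random constant inside $\E\int_0^T(\cdot)\,dt$, which cannot be pulled out. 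Second, and fatally, your claim that points $z$ outside $\Lambda(\R)$ ``contribute a trivially bounded integral'' fails at a finite endpoint of $\Lambda(\R)$: take $\sigma(x)=e^x$, so $\Lambda(x)=1-e^{-x}$ and $\Lambda(\R)=(-\infty,1)$; then $|\tilde Z_t-1|^{-\gamma}=e^{\gamma Z_t}$, so the supremum in \eqref{eq:sufficient-variability} for $\tilde Z$ at $z=1$ is an exponential moment of $Z$. Choosing, e.g., $Z_t=\exp(|B^H_t|)$ (which has a bounded density with integrable-in-$t$ bound, hence satisfies Assumption \ref{assumption:key}, and which is of the required form with $Y_t:=\Lambda(Z_t)-\Lambda(Z_0)$, a H\"older process since $\Lambda$ is Lipschitz on the range of $Z$) gives $\E\int_0^T e^{\gamma Z_t}dt=\infty$. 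So Assumption \ref{assumption:key} for $Z$ does not imply it for $\tilde Z$, and Theorem \ref{thm:existence} cannot be invoked as a black box under the Corollary's hypothesis.

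The paper's argument avoids this entirely: it uses the hypothesis on $Z$ \emph{directly}. The only composition that actually appears in the It\^o-formula machinery behind Theorem \ref{thm:existence} is $\sigma\circ Z=(\sigma\circ\Lambda^{-1})\circ\tilde Z$, and the key estimates (Proposition \ref{prop:key-estimate} and Proposition \ref{prop:key-estimate-approx}) may be applied with the pair $f=\sigma$, $x=Z$ --- for which Assumption \ref{assumption:key} for $Z$ together with $\sigma\in BV_{loc}$ (Assumption \ref{assu:sigma}) is precisely what is needed --- rather than with $f=\sigma\circ\Lambda^{-1}$, $x=\tilde Z$. Re-running the proof of Theorem \ref{thm:existence} (this is what the paper means by ``Theorem \ref{the:Ito} together with the proof of Theorem \ref{thm:existence}'') then yields $Z_t-Z_0=\int_0^t\sigma(Z_s)\,dY_s$, i.e.\ existence, after which Theorem \ref{thm:uniqueness} gives uniqueness on $[0,\tau]$ as you stated. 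To repair your write-up you should replace the hypothesis-transfer paragraph by this direct re-run of the existence proof under the Corollary's own hypothesis.
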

\begin{corollary}
\label{cor:uniqueness2}
Let $\sigma$ satisfy Assumption \ref{assu:sigma} and let $X_0,Y_0\in\R$ be constants. Suppose further that $Y_t$ admits a density function $p_t(y)$ almost everywhere such that 
$$
\sup_{y\in\R} p_t(y) \in L^1([0,T]).
$$
If further $\sigma \geq \epsilon$ for all $x\in\R$, then \eqref{eq:SDE} has a unique solution in the class of processes satisfying \ref{assumption:key} given by 
$Z_t = \Lambda^{-1}(\Lambda(X_0)+Y_t - Y_0)$.
\end{corollary}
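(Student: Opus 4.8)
The plan is to derive Corollary~\ref{cor:uniqueness2} from Theorem~\ref{thm:existence}, Corollary~\ref{cor:uniqueness} and Example~\ref{example:density}; the substance lies in checking that their hypotheses hold in the present setting. Write $U_t=\Lambda(X_0)+Y_t-Y_0$ and $Z_t=\Lambda^{-1}(U_t)$. I would establish, in order: (a)~that $U$ satisfies Assumption~\ref{assumption:key}, so that Theorem~\ref{thm:existence} applies and $Z$ is a well-defined solution of \eqref{eq:SDE}; (b)~that $Z$ itself satisfies Assumption~\ref{assumption:key}, so that Corollary~\ref{cor:uniqueness} applies; and (c)~that the stopping time $\tau=\inf\{t\ge 0:\sigma(Z_t)=0\}$ equals $+\infty$, which upgrades ``uniqueness on $[0,\tau]$'' to ``uniqueness on $[0,T]$''. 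To avoid a clash with the constant $\epsilon$ in the hypothesis $\sigma\ge\epsilon$, I write $\delta>0$ for the slack parameter appearing in Assumption~\ref{assumption:key}.

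Step~(c) is immediate, since $\sigma(x)\ge\epsilon>0$ for every $x$ forces $\sigma(Z_t)\ge\epsilon$ and hence $\tau=+\infty$. For step~(a), note that $X_0,Y_0$ being constants, $U$ is a deterministic translate of $Y$; it is therefore $\alpha$-H\"older continuous because $Y$ is, and for every exponent $c>0$
\[
\sup_{y\in\R}\E\int_0^T|U_t-y|^{-c}\,\ud t\weq\sup_{y\in\R}\E\int_0^T|Y_t-y|^{-c}\,\ud t .
\]
By the density hypothesis \eqref{eq:density} and the argument in Example~\ref{example:density}, the right-hand side is finite whenever $c<1$. Since $\alpha>\frac12$, I may fix $\beta\in(1-\alpha,\alpha)$ and then $\delta\in(0,\alpha-\beta)$, so that $c:=\frac{\beta+\delta}{\alpha}<1$; with this choice $U$ satisfies Assumption~\ref{assumption:key}, and Theorem~\ref{thm:existence} gives that $Z$ solves \eqref{eq:SDE}.

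The only genuinely delicate step is (b), and here the hypothesis $\sigma\ge\epsilon$ — rather than merely $\sigma>0$ — is used. From $\frac1\sigma\le\frac1\epsilon$ one obtains $\Lambda(b)-\Lambda(a)\le\frac{b-a}{\epsilon}$ for $a<b$; substituting $a=\Lambda^{-1}(v)$ and $b=\Lambda^{-1}(u)$ with $v<u$ yields the lower bound
\[
|\Lambda^{-1}(u)-\Lambda^{-1}(v)|\wge\epsilon\,|u-v|,\qquad u,v\in\Lambda(\R).
\]
Moreover $\sigma$ is locally bounded (being of locally bounded variation), so $\Lambda^{-1}$ is Lipschitz on compact sets; since the $\alpha$-H\"older path $U$ lies in a compact subset of $\Lambda(\R)$ on $[0,T]$, composing with $\Lambda^{-1}$ shows that $Z$ is $\alpha$-H\"older. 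For the variability bound, fix $y\in\R$, write $y=\Lambda^{-1}(\Lambda(y))$, and apply the displayed lower bound to get $|Z_t-y|\ge\epsilon\,|U_t-\Lambda(y)|=\epsilon\,|Y_t-y'|$ with $y'=Y_0+\Lambda(y)-\Lambda(X_0)$; hence
\[
\sup_{y\in\R}\E\int_0^T|Z_t-y|^{-c}\,\ud t\wle\epsilon^{-c}\sup_{z\in\R}\E\int_0^T|Y_t-z|^{-c}\,\ud t<\infty
\]
for the same $c=\frac{\beta+\delta}{\alpha}<1$. Thus $Z$ satisfies Assumption~\ref{assumption:key}, Corollary~\ref{cor:uniqueness} applies, and together with $\tau=+\infty$ it follows that $Z_t=\Lambda^{-1}(\Lambda(X_0)+Y_t-Y_0)$ is the unique solution of \eqref{eq:SDE} on $[0,T]$ within the class of processes satisfying Assumption~\ref{assumption:key}. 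I expect the main obstacle to be precisely this transfer of the variability estimate from $Y$ to $Z$: without a uniform positive lower bound on $\sigma$, the map $\Lambda^{-1}$ may contract distances near a near-zero of $\sigma$ and $|Z_t-y|^{-c}$ need not remain integrable, so the argument genuinely relies on $\sigma\ge\epsilon$; the remaining points (H\"older continuity of $Z$, $\tau=+\infty$, and the bookkeeping with $\beta$ and $\delta$) are routine.
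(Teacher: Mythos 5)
Your proof is correct. The paper's own proof is much shorter and differs at the one substantive point, namely verifying that $Z$ itself satisfies Assumption~\ref{assumption:key} so that Corollary~\ref{cor:uniqueness} applies: the paper computes the density of $Z_t$ by a change of variables, $\tilde p_t(y)=p_t(\Lambda(y)-\Lambda(X_0)+Y_0)\Lambda'(y)$, and uses $\Lambda'=1/\sigma\le 1/\epsilon$ to conclude that $\sup_{y}\tilde p_t(y)\in L^1([0,T])$, after which Example~\ref{example:density} yields Assumption~\ref{assumption:key} for $Z$. You instead bypass densities entirely and transfer the moment bound of \eqref{eq:sufficient-variability} directly via the inequality $|\Lambda^{-1}(u)-\Lambda^{-1}(v)|\ge\epsilon|u-v|$, which is the same exploitation of $\sigma\ge\epsilon$ in a different guise. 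Your version is marginally more robust: it does not require justifying that $Z_t$ actually admits a density, and it would work verbatim under the weaker hypothesis that $Y$ merely satisfies the moment bound \eqref{eq:sufficient-variability} rather than the density bound \eqref{eq:density}. You also make explicit two points the paper leaves implicit, namely the $\alpha$-H\"older continuity of $Z$ (part of Assumption~\ref{assumption:key}) and the fact that $\sigma\ge\epsilon$ forces $\tau=+\infty$, which is exactly what upgrades uniqueness on $[0,\tau]$ to uniqueness on all of $[0,T]$. Both arguments share the tacit assumption that $\Lambda(X_0)+Y_t-Y_0$ stays in the range of $\Lambda$ (so that $\Lambda^{-1}$ is applicable); your restriction of the key inequality to $u,v\in\Lambda(\R)$ acknowledges this correctly.
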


\subsection{Examples}
In this subsection we present some interesting examples. Throughout we assume that the driving process $Y_t$ is H\"older continuous of order $\alpha>\frac12$ and has a density $p_t(y)$ satisfying \eqref{eq:density}. Such processes are discussed in Example \ref{example:density}, and include particularly the case of the fractional Brownian motion $B^H$ with $H>\frac12$. We stress also that the following examples are simply illustrations how our results can be applied. For notational simplicity, we also assume $Y_0=0$. 
\begin{example}
Let 
$$
\sigma(x) = \beta_+ \textbf{1}_{x\geq a} + \beta_-\textbf{1}_{x<a} + \sigma_0(x),
$$
where $\sigma_0(x)\geq 0$ is an arbitrary function of locally bounded variation, and $\beta_+,\beta_->0$. Then for any initial condition $X_0$ the SDE \eqref{eq:SDE} admits a unique solution $X_t = \Lambda^{-1}(\Lambda(X_0) + Y_t- Y_0)$. This can be viewed as a generalisation of the results provided in \cite{soledad-et-al} to cover larger class of coefficients $\sigma$ and drivers $Y$. Indeed, the particular equation studied in \cite{soledad-et-al} can be recovered by choices $\sigma_0 \equiv 0$, $\beta_+ = \frac{1}{\alpha}$, $\beta_- = \frac{1}{1-\alpha}$, $a=0$, and $Y = B^H$. 
\end{example}
\begin{example}
\label{example:nualart}
Let $\sigma$ be of locally bounded variation such that $\sigma(x)\geq |x|^{\gamma}$, where $\gamma\in(0,1)$, and suppose $X_0$ is a constant. It follows directly from Theorem \ref{thm:existence} that $X_t = \Lambda^{-1}(\Lambda(X_0)+Y_t)$ provides a solution to \eqref{eq:SDE}. This generalises the existence result provided in \cite{nualart-et-al}, where $\sigma$ was assumed to be monotonic and continuous. In addition, the authors in \cite{nualart-et-al} posed additional condition $\gamma < \frac{1}{H}-1$. In comparison, here we do not need additional assumptions for $\gamma$, or on continuity or monotonicity of $\sigma$. Furthermore, \cite{nualart-et-al} did not discuss the uniqueness in details. Applying Theorem \ref{thm:uniqueness} we can directly say that the solution is unique (in the class of processes satisfying \ref{assumption:key}) up to the first point $\tau$ when $\sigma(X_\tau)=0$. Furthermore, we can apply Corollary \ref{cor:uniqueness} to study when $X_t = \Lambda^{-1}(\Lambda(X_0)+Y_t)$ provides the unique solution. For this $\sigma(x) \geq |x|^\gamma$ implies 
$
|\Lambda^{-1}(y)|^{-\frac{\beta}{\alpha}} \leq |y|^{-\frac{\beta}{\alpha(1-\gamma)}},
$
which leads to a restriction $\gamma < 2- \frac{1}{\alpha}$ by choosing $\beta \approx 1- \alpha$. This means that for $\alpha>\frac23$ we can tackle larger values of $\gamma$ compared to \cite{nualart-et-al}, and beyond continuity or monotonicity assumptions. On the other hand, for $\frac{1}{2}< \alpha <\frac23$ our condition $\gamma < 2-\frac{1}{\alpha}$ is stronger than the one posed in \cite{nualart-et-al}. We also point out that, by using continuity of $\sigma$, the authors of \cite{nualart-et-al} were able to study multidimensional SDEs. In this article we are only studying one-dimensional problems.
\end{example}
\begin{example}
Let $\sigma(x) = \epsilon_0 + f(x)$, where $\epsilon_0>0$ and $f(x)$ is the Cantor function. Furthermore, let $X_0$ be a constant. Then Corollary \ref{cor:uniqueness2} implies that $X_t = \Lambda^{-1}(\Lambda(X_0)+Y_t)$ provides the unique solution to \eqref{eq:SDE}. This provides an example of an SDE that involves the Cantor function and is still solvable uniquely. Even if this SDE does not have direct practical applications, such equations are interesting at least out of academic curiosity.
\end{example}
\section{Fractional integrals and derivatives}
\label{sec:fractional}
In this section we recall some preliminaries on fractional integrals and the concept of generalised Lebesgue-Stieltjes integral. For details we refer to \cite{Nualart_Rascanu_2002,Samko_Kilbas_Marichev_1993, Zahle_1998}.

Throughout this section, let $T < \infty$ be fixed. The fractional left and right Riemann--Liouville integrals of order $\theta > 0$ of a function $f \in L_1$ are denoted by
\[
 I^\theta_{0+}f(t)
 \weq \frac{1}{\Gamma(\theta)} \int_0^t \frac{f(s)}{(t-s)^{1-\theta}} \, ds
\]
and
\[
 I^\theta_{T-}f(t)
 \weq \frac{(-1)^{-\theta}}{\Gamma(\theta)} \int_t^T \frac{f(s)}{(t-s)^{1-\theta}} \, ds.
\]
It is known that the integral operators $I^\theta_{0+}, I^\theta_{T-}: L_1 \to L_1$ are linear and one-to-one. The inverse operators are known as Riemann--Liouville fractional derivatives, and denoted by $I^{-\theta}_{0+} = (I^\theta_{0+})^{-1}$ and $I^{-\theta}_{T-} = (I^\theta_{T-})^{-1}$. Furthermore, it is known that, for any $\theta \in (0,1)$ and for any $f \in I^\theta_{0+}(L_1)$ and $g \in I^\theta_{T-}(L_1)$, the Weyl--Marchaud derivatives 
\[
 D_{0+}^\theta f(t)
 \weq \frac{1}{\Gamma(1-\theta)}\left( \frac{f(t)}{t^\theta} + \theta \int_0^t \frac{f(t)-f(s)}{(t-s)^{\theta+1}} \, ds \right)
\]
and
\[
 D_{T-}^\theta g(t)
 \weq \frac{(-1)^\theta}{\Gamma(1-\theta)}\left( \frac{g(t)}{(T-t)^\theta} + \theta \int_t^T \frac{g(t)-g(s)}{(s-t)^{\theta+1}} \, ds \right)
\]
are well defined, and coincide with the Riemann--Liouville derivatives by relations $D_{0+}^\theta f(t) = I^{-\theta}_{0+} f(t)$ and $D_{T-}^\theta g(t) = I^{-\theta}_{T-} g(t)$ for almost every $t \in (0,T)$.

Let now $f$ and $g$ be functions such that the limits $f(0+), g(0+), g(T-)$ exist in $\R$, and denote $f_{0+}(t) = f(t) - f(0+)$ and $g_{T-}(t) = g(t) - g(T-)$. When $f_{0+} \in I^\theta_{0+}(L_p)$ and $g_{T-} \in I^{1-\theta}_{T-}(L_q)$ for some $\theta \in [0,1]$ and $p,q \in [1,\infty]$ such that $1/p+1/q =1$,
the fractional version of the Stieltjes integral introduced by Z\"ahle \cite{Zahle_1998} is defined by
\begin{equation}
 \label{eq:ZSIntegral}
 \begin{aligned}
 \int_0^T f_t \, dg_t
 &\weq (-1)^\theta \int_0^T D^{\theta}_{0+} (f-f(0+))(t) \, D^{1-\theta}_{T-} (g-g(T-))(t) \, dt \\
 &\qquad \qquad + f(0+)(g(T-)-g(0+)),
 \end{aligned}
\end{equation}
where the right side does not depend on $\theta$. In order to ensure the existence of the integral, we introduce the following spaces. For $\theta > 0$, we denote by $W_{\theta,1,T}(0+)$ the space of measurable functions $f: (0,T) \to \R$ such that $f(0+) \in \R$ exists and
\begin{equation}
\label{eq:norm}
 \Vert f\Vert_{\theta,1,T}
 = \int_0^T \frac{|f(t)|}{t^\theta} dt + \int_0^T \int_0^t \frac{|f(t)-f(s)|}{|t-s|^{1+\theta}} ds dt
\end{equation}
is finite. 
Similarly, we denote by $W_{\theta,\infty}(T-)$ the space of measurable functions $f: (0,T) \to \R$ such that $f(T-) \in \R$ exists and
\[
 ||f||_{\theta,\infty,T}
 \weq \sup_{t \in (0,T)} \frac{|f(T-) - f(t)|}{(T-t)^\theta} + \sup_{t \in (0,T)} \int_t^T\frac{|f(t)-f(s)|}{|t-s|^{1+\theta}} \, ds < \infty.
 \]
As $T$ is fixed, throughout the paper we drop the dependence on $T$ and simply write $W_{\theta,\infty}$ and $\Vert f\Vert_{\theta,\infty}$ instead of $W_{\theta,\infty}(T-)$ and $\Vert f\Vert_{\theta,\infty,T}$.  
We have the following result (see, e.g. \cite{Nualart_Rascanu_2002}).
\begin{proposition}
\label{the:ZSIntegralBound}
Assume that $f \in W_{\theta,1,T}(0+)$ and $g \in W_{1-\theta,\infty}$ for some $\theta \in (0,1)$. Then the integral in \eqref{eq:ZSIntegral} is well defined, representable as
\begin{equation}
 \label{eq:ZSIntegralSimple}
 \int_0^T f_t \, dg_t
 \weq (-1)^\theta \int_0^T D^{\theta}_{0+} f(t) \, D^{1-\theta}_{T-} (g-g(T-))(t) \, dt,
\end{equation}
and bounded by
\begin{equation}
 \label{eq:ZSIntegralBound}
 \left|\int_a^b f_t \, dg_t \right|
 \wle \frac{||f||_{\theta,1,T} \, ||g||_{1-\theta,\infty}}{\Gamma(\theta) \Gamma(1-\theta)}.
\end{equation}
In this case, for every $t\in[0,T]$ the restriction $\textbf{1}_{[0,t]}f$ belongs  to $W_{\theta,1,T}(0+)$ and 
the integral
$$
\int_0^t f_s \, dg_s = \int_0^T \textbf{1}_{[0,t]}(s)f_s \, dg_s
$$
is well-defined.
\end{proposition}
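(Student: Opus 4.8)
The plan is to work directly from the defining formula \eqref{eq:ZSIntegral} and to reduce everything to two one-sided estimates: an $L_1$-bound on the left derivative $D^\theta_{0+}f$ and an $L_\infty$-bound on the right derivative $D^{1-\theta}_{T-}(g-g(T-))$. Once these are in hand, H\"older's inequality makes the product integrable and produces the quantitative bound \eqref{eq:ZSIntegralBound}, while the representation \eqref{eq:ZSIntegralSimple} follows after checking that the boundary term in \eqref{eq:ZSIntegral} is absorbed into the first integral.

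First I would estimate the derivatives. Taking absolute values inside the Weyl--Marchaud formula for $D^\theta_{0+}f$ and integrating in $t$ over $(0,T)$, the two resulting terms are precisely the two terms defining $\Vert f\Vert_{\theta,1,T}$ in \eqref{eq:norm} (up to the factor $\theta<1$ in front of the double integral), so that $D^\theta_{0+}f\in L_1$ with $\Vert D^\theta_{0+}f\Vert_{L_1}\le \Gamma(1-\theta)^{-1}\Vert f\Vert_{\theta,1,T}$. Similarly, applying the Weyl--Marchaud formula to $g-g(T-)$ (whose increments coincide with those of $g$) and taking the supremum in $t$, the two terms are dominated by the two terms of $\Vert g\Vert_{1-\theta,\infty}$, giving $\Vert D^{1-\theta}_{T-}(g-g(T-))\Vert_{L_\infty}\le \Gamma(\theta)^{-1}\Vert g\Vert_{1-\theta,\infty}$. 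The product of an $L_1$ function and an $L_\infty$ function is integrable, and \eqref{eq:ZSIntegralBound} is exactly the product of these two estimates.

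To obtain the representation \eqref{eq:ZSIntegralSimple} I would first invoke the characterisation of the range $I^\theta_{0+}(L_1)$ \cite{Samko_Kilbas_Marichev_1993} to see that the finiteness of $\Vert f\Vert_{\theta,1,T}$ guarantees $f-f(0+)\in I^\theta_{0+}(L_1)$, and likewise $g-g(T-)\in I^{1-\theta}_{T-}(L_\infty)$, so that the definition \eqref{eq:ZSIntegral} applies. Using linearity of $D^\theta_{0+}$ together with $D^\theta_{0+}\mathbf{1}(t)=t^{-\theta}/\Gamma(1-\theta)$, the integrand of \eqref{eq:ZSIntegral} differs from that of \eqref{eq:ZSIntegralSimple} only by the term $(-1)^\theta f(0+)\int_0^T \frac{t^{-\theta}}{\Gamma(1-\theta)}D^{1-\theta}_{T-}(g-g(T-))(t)\,dt$. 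It therefore remains to prove the scalar identity
\[
(-1)^\theta\int_0^T \frac{t^{-\theta}}{\Gamma(1-\theta)}\,D^{1-\theta}_{T-}(g-g(T-))(t)\,dt = g(T-)-g(0+),
\]
which is nothing but the statement that the integral of the constant $1$ against $dg$ equals $g(T-)-g(0+)$. I would establish it by fractional integration by parts, i.e. the adjointness of $I^\theta_{0+}$ and $I^{1-\theta}_{T-}$ together with the composition rule $D^{\theta}_{T-}D^{1-\theta}_{T-}=D^1_{T-}$, reducing the left side to $\int_0^T D^1_{T-}(g-g(T-))(t)\,dt$.

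Finally, for the restriction claim I would bound $\Vert \mathbf{1}_{[0,t]}f\Vert_{\theta,1,T}$. The weighted $L_1$-term only decreases. The double integral splits into the region where both arguments lie in $[0,t]$, dominated by the corresponding term of $\Vert f\Vert_{\theta,1,T}$, and the cross region where one argument exceeds $t$; integrating the outer variable over $(t,T)$ there and using $\theta<1$ reduces this contribution to a constant multiple of $\int_0^t |f(r)|(t-r)^{-\theta}\,dr$, which is finite for $f\in W_{\theta,1,T}(0+)$, so that $\mathbf{1}_{[0,t]}f\in W_{\theta,1,T}(0+)$ and the first part applies to $\mathbf{1}_{[0,t]}f$. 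I expect the genuine obstacle to be the representation step: the estimates are routine domination arguments, but verifying the scalar identity with the correct bookkeeping of the $(-1)^\theta$ prefactors (which must cancel in pairs to yield a real quantity) and justifying the composition and integration-by-parts manipulations on the borderline spaces $L_1$ and $L_\infty$ is where the care is needed.
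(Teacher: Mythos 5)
The paper itself offers no proof of this proposition --- it is quoted from the literature (see \cite{Nualart_Rascanu_2002}, going back to \cite{Zahle_1998}) --- so your attempt has to be judged against the standard argument there. Your first step coincides with that argument and is correct: integrating the Weyl--Marchaud formula in $t$ gives $\Vert D^{\theta}_{0+}f\Vert_{L_1}\le \Gamma(1-\theta)^{-1}\Vert f\Vert_{\theta,1,T}$, taking suprema gives $\Vert D^{1-\theta}_{T-}(g-g(T-))\Vert_{L_\infty}\le \Gamma(\theta)^{-1}\Vert g\Vert_{1-\theta,\infty}$, and H\"older's inequality yields both the integrability of the product and the bound \eqref{eq:ZSIntegralBound}; the range inclusions $f-f(0+)\in I^{\theta}_{0+}(L_1)$ and $g-g(T-)\in I^{1-\theta}_{T-}(L_\infty)$ are standard and citable. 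The two remaining steps, however, each contain a genuine gap.

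First, the scalar identity. Your route via adjointness plus the composition rule, ending with $\int_0^T D^{1}_{T-}(g-g(T-))(t)\,dt$, cannot work: moving $D^{\theta}_{0+}$ off the constant function onto the other factor as $D^{\theta}_{T-}$ requires $D^{1-\theta}_{T-}(g-g(T-))\in I^{\theta}_{T-}(L_q)$, i.e. $g-g(T-)\in I^{1}_{T-}(L_q)$, so $g$ would have to be absolutely continuous; otherwise $D^{1}_{T-}(g-g(T-))$ does not even exist as a function. The functions $g\in W_{1-\theta,\infty}$ for which this theory is built (H\"older paths of fractional Brownian motion) are nowhere differentiable, so the step fails precisely in the intended generality; this is not a matter of ``care on borderline spaces''. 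The repair is shorter than the broken step: since $g-g(T-)=I^{1-\theta}_{T-}h$ with $h:=D^{1-\theta}_{T-}(g-g(T-))\in L_\infty$, let $t\downarrow 0$ in the defining integral of $I^{1-\theta}_{T-}h$; dominated convergence gives
\begin{equation*}
g(0+)-g(T-) \weq \frac{(-1)^{\theta-1}}{\Gamma(1-\theta)}\int_0^T s^{-\theta}\,h(s)\,ds,
\end{equation*}
which, recalling $D^{\theta}_{0+}\mathbf{1}(t)=t^{-\theta}/\Gamma(1-\theta)$ and keeping track of the formal factors $(-1)^{\cdot}$, is exactly the identity $(-1)^{\theta}\int_0^T D^{\theta}_{0+}\mathbf{1}(t)\,D^{1-\theta}_{T-}(g-g(T-))(t)\,dt=g(T-)-g(0+)$ that you need. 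No integration by parts or composition of derivatives is involved.

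Second, the restriction claim. Your reduction of the cross region to $\int_0^t |f(r)|(t-r)^{-\theta}\,dr$ is right, but the assertion that this quantity ``is finite for $f\in W_{\theta,1,T}(0+)$'' is precisely the point requiring proof, not a consequence one can read off: the norm \eqref{eq:norm} controls the weight $r^{-\theta}$ at the origin and the Gagliardo double integral, and neither gives, by inspection, integrability of $|f|$ against the singular weight $(t-r)^{-\theta}$ at an arbitrary interior point $t$. (Fubini gives finiteness for almost every $t$; the proposition claims every $t$.) The claim is true, but it needs an argument: for instance, with dyadic annuli $A_k=(t-2^{-k}\delta,\,t-2^{-k-1}\delta]$ shrinking to $t$ and $m_k$ the average of $|f|$ over $A_k$, comparing $m_k$ with $m_{k-1}$ through the Gagliardo integrand on $A_k\times A_{k-1}$, telescoping, and summing against $2^{-k(1-\theta)}$ yields a bound of the form $\int_0^t |f(r)|(t-r)^{-\theta}\,dr\le C\left([f]_{\theta,1}+\Vert f\Vert_{L_1}\right)$ with $C=C(\theta,t,T)$. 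Alternatively one can follow \cite{Zahle_1998,Nualart_Rascanu_2002}, where $\int_0^t f\,dg$ is defined intrinsically with the right-sided operators based at $t$ rather than $T$, and the identification with $\int_0^T \mathbf{1}_{[0,t]}f\,dg$ is obtained from additivity of the integral over adjacent intervals. Without one of these, the last part of the proposition is left unproved.
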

Motivated by this result, we introduce the space $W_{\theta,1}$ (which do depend on $T$ as well but omitted on the notation) as the space of functions such that \eqref{eq:norm} is finite, but $f(0+)$ does not necessarily exists. We use the following definition for our integral.
\begin{definition}
\label{def:integral}
Let  $f \in W_{\theta,1}$ and $g \in W_{1-\theta,\infty}$ for some $\theta \in (0,1)$. Then we define the integral by 
\begin{equation}
 \label{eq:ZSIntegralSimple-definition}
 \int_0^T f_t \, dg_t
 \weq (-1)^\theta \int_a^b D^{\theta}_{0+} f(t) \, D^{1-\theta}_{T-} (g-g(T-))(t) \, dt.
\end{equation}
\end{definition}
We get the following result stating that our integral is well-defined.
\begin{proposition}
Let  $f \in W_{\theta,1}$ and $g \in W_{1-\theta,\infty}$ for some $\theta \in (0,1)$. Then the integral \eqref{eq:ZSIntegralSimple-definition} is well-defined and bounded according to \eqref{eq:ZSIntegralBound}. Moreover, for every $t\in[0,T]$ the integral 
$$
\int_0^t f_s \,dg_s = \int_0^T \textbf{1}_{[0,t]}(s)f_s\,dg_s
$$
is well-defined.
\end{proposition}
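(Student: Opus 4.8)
The plan is to reduce both assertions to the two one-sided estimates already underlying Proposition \ref{the:ZSIntegralBound}, the key point being that the existence of $f(0+)$ never enters them. The Weyl--Marchaud derivative
\[
D^\theta_{0+}f(t) \weq \frac{1}{\Gamma(1-\theta)}\left(\frac{f(t)}{t^\theta} + \theta\int_0^t\frac{f(t)-f(s)}{(t-s)^{\theta+1}}\,ds\right)
\]
is defined through $f$ alone and never refers to $f(0+)$; by Fubini its right-hand side is finite for almost every $t$ as soon as $\Vert f\Vert_{\theta,1,T}<\infty$. Thus $D^\theta_{0+}f$ is meaningful for every $f\in W_{\theta,1}$, and so is the integrand in \eqref{eq:ZSIntegralSimple-definition}.

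First I would record the two norm estimates. Taking absolute values above and using $\theta<1$ gives
\[
\Vert D^\theta_{0+}f\Vert_{L^1(0,T)} \wle \frac{1}{\Gamma(1-\theta)}\left(\int_0^T\frac{|f(t)|}{t^\theta}\,dt + \int_0^T\int_0^t\frac{|f(t)-f(s)|}{(t-s)^{\theta+1}}\,ds\,dt\right) \wle \frac{\Vert f\Vert_{\theta,1,T}}{\Gamma(1-\theta)},
\]
which is the computation behind Proposition \ref{the:ZSIntegralBound}, only with the now-absent boundary term $f(0+)$. Writing out $D^{1-\theta}_{T-}(g-g(T-))$ and using $1-\theta<1$ gives likewise $\Vert D^{1-\theta}_{T-}(g-g(T-))\Vert_{L^\infty(0,T)}\le \Vert g\Vert_{1-\theta,\infty}/\Gamma(\theta)$. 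Pairing an $L^1$ function against an $L^\infty$ one shows the integrand in \eqref{eq:ZSIntegralSimple-definition} lies in $L^1(0,T)$, so the integral is well defined and
\[
\left|\int_0^T f_t\,dg_t\right| \wle \Vert D^\theta_{0+}f\Vert_{L^1}\,\Vert D^{1-\theta}_{T-}(g-g(T-))\Vert_{L^\infty} \wle \frac{\Vert f\Vert_{\theta,1,T}\,\Vert g\Vert_{1-\theta,\infty}}{\Gamma(\theta)\Gamma(1-\theta)},
\]
which is \eqref{eq:ZSIntegralBound}.

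For the final assertion it suffices to verify $\mathbf{1}_{[0,t]}f\in W_{\theta,1}$ for each $t$, after which $\int_0^T\mathbf{1}_{[0,t]}(s)f_s\,dg_s$ is covered by the first part. Splitting the double integral in \eqref{eq:norm} for $\mathbf{1}_{[0,t]}f$ according to whether the two variables both lie in $(0,t]$, both in $(t,T)$, or straddle $t$, the first region is dominated by $\Vert f\Vert_{\theta,1,T}$ and the second vanishes, so the only new contribution is the crossing term
\[
\int_t^T\int_0^t\frac{|f(s)|}{(u-s)^{\theta+1}}\,ds\,du \wle \frac{1}{\theta}\int_0^t\frac{|f(s)|}{(t-s)^{\theta}}\,ds.
\]
The main obstacle is to bound the right-hand side for \emph{every} $t$, not merely almost every $t$, since the resulting object should be defined at all times.

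I would resolve this by passing to $|f|$. Because $\bigl||f(t)|-|f(s)|\bigr|\le|f(t)-f(s)|$ we have $|f|\in W_{\theta,1}$ with $\Vert\,|f|\,\Vert_{\theta,1,T}\le\Vert f\Vert_{\theta,1,T}$, so $D^\theta_{0+}|f|\in L^1$ by the first step. The composition rule $I^{1-\theta}_{0+}|f| = I^1_{0+}D^\theta_{0+}|f|$, valid on $I^\theta_{0+}(L^1)$ (see \cite{Samko_Kilbas_Marichev_1993}), then yields
\[
\frac{1}{\theta}\int_0^t\frac{|f(s)|}{(t-s)^\theta}\,ds \weq \frac{\Gamma(1-\theta)}{\theta}\int_0^t D^\theta_{0+}|f|(s)\,ds \wle \frac{\Gamma(1-\theta)}{\theta}\,\Vert D^\theta_{0+}|f|\Vert_{L^1} \wle \frac{1}{\theta}\Vert f\Vert_{\theta,1,T},
\]
which is finite and, being a partial integral of a fixed $L^1$ function, is bounded uniformly in $t$. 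Hence $\Vert\mathbf{1}_{[0,t]}f\Vert_{\theta,1,T}<\infty$ for all $t$, as required. Alternatively, one may simply invoke the restriction statement of Proposition \ref{the:ZSIntegralBound}, whose proof relies only on the finiteness of $\Vert f\Vert_{\theta,1,T}$ and not on the existence of $f(0+)$.
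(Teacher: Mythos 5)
Your proof is correct, and it is essentially the argument the paper intends: the paper states this proposition without any proof, treating it as immediate from the estimates underlying Proposition \ref{the:ZSIntegralBound} (taken from \cite{Nualart_Rascanu_2002}), precisely because the existence of $f(0+)$ never enters those estimates --- the observation your write-up makes explicit, and which you also record as your closing alternative argument. The only step that goes beyond routine rewriting is the crossing term in the restriction claim, and your treatment of it is sound; the single point deserving care is that the composition rule $I^{1-\theta}_{0+}|f| \weq I^{1}_{0+}D^{\theta}_{0+}|f|$ is usually stated as an almost-everywhere identity, whereas you need a bound at \emph{every} $t$. This is repaired in one line: writing $\phi = D^{\theta}_{0+}|f| = \phi_{+}-\phi_{-}$ with $\phi_{\pm}\ge 0$ in $L^{1}$, one has $|f| \le I^{\theta}_{0+}\phi_{+}$ almost everywhere, and Tonelli together with the Beta integral gives, for every $t\in[0,T]$,
\[
\int_0^t \frac{|f(s)|}{(t-s)^{\theta}}\,ds
\wle \int_0^t \frac{I^{\theta}_{0+}\phi_{+}(s)}{(t-s)^{\theta}}\,ds
\weq \Gamma(1-\theta)\int_0^t \phi_{+}(s)\,ds
\wle \Gamma(1-\theta)\,\Vert D^{\theta}_{0+}|f|\Vert_{L^{1}}
\wle \Vert f\Vert_{\theta,1,T},
\]
which is exactly the uniform-in-$t$ bound your argument requires.
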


Throughout the paper we consider integrals with respect to $g$ that is H\"older continuous of some order strictly larger than $1-\theta$ and $f\in W_{\theta,1}$ such that $f$ is bounded. Thus we introduce the following notation: the H\"older seminorm of order $\theta > 0$ of a measurable function $x: [0,T] \to \R$ is denoted by
\[
 [x]_{\theta,\infty}
 \weq \sup_{0 \le s < t \le T} \frac{|x(t) - x(s)|}{|t-s|^\theta},
\]
and the {\bf Gagliardo seminorm} of order $\theta>0$ and exponent $p$ by
\begin{equation}
 \label{eq:Gagliardo}
 [x]_{\theta, p}
 \weq \left(\int_0^T \int_0^T \frac{|x(t) - x(s)|^p}{|t-s|^{1+\theta p}} \, ds \, dt \right)^{\frac{1}{p}}.
\end{equation}
We will make use of the following simple proposition all the time.
\begin{proposition}
\label{prop:remove-first-term}
Let $f$ be bounded. Then $f\in W_{\theta,1}$ if and only if $[f]_{\theta,1}  <\infty$. Moreover, for every $f_n$ such that $f_n \to f$ pointwise and $f_n$ is uniformly bounded, we have $f_n \to f$ in $W_{\theta,1}$ if and only if $[f_n-f]_{\theta,1} \to 0$.
 \end{proposition}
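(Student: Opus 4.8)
\section*{Proof proposal}

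The plan is to reduce everything to the elementary observation that, for a bounded function, the first term in the norm \eqref{eq:norm} is harmless while the second term coincides, up to a factor $2$, with the Gagliardo seminorm $[f]_{\theta,1}$ of \eqref{eq:Gagliardo}.

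First I would record the identity
$$
\int_0^T\int_0^t \frac{|f(t)-f(s)|}{|t-s|^{1+\theta}}\,ds\,dt \weq \tfrac12\, [f]_{\theta,1},
$$
which is immediate from the symmetry of the nonnegative integrand $\frac{|f(t)-f(s)|}{|t-s|^{1+\theta}}$ in the pair $(s,t)$ together with Tonelli's theorem. Next, since $\theta\in(0,1)$, the map $t\mapsto t^{-\theta}$ is integrable on $(0,T)$, so for bounded $f$,
$$
\int_0^T \frac{|f(t)|}{t^\theta}\,dt \wle \Big(\sup_{t\in[0,T]}|f(t)|\Big)\,\frac{T^{1-\theta}}{1-\theta} \wles \infty .
$$
Combining the two displays gives $\Vert f\Vert_{\theta,1,T} = \int_0^T t^{-\theta}|f(t)|\,dt + \tfrac12[f]_{\theta,1}$, which is finite precisely when $[f]_{\theta,1}<\infty$; this is the first assertion.

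For the second assertion I would apply the same decomposition to $f_n-f$. Note first that $f$ is bounded and measurable, being a pointwise limit of the uniformly bounded measurable sequence $(f_n)$; set $M=\sup_n\sup_{t\in[0,T]}|f_n(t)|$, so that $\sup_{t\in[0,T]}|f_n(t)-f(t)|\le 2M$ for all $n$. By the first part $f_n-f\in W_{\theta,1}$ as soon as $[f_n-f]_{\theta,1}<\infty$, and
$$
\Vert f_n-f\Vert_{\theta,1,T} \weq \int_0^T \frac{|f_n(t)-f(t)|}{t^\theta}\,dt + \tfrac12[f_n-f]_{\theta,1}.
$$
The crux is that the first term on the right tends to $0$: its integrand converges to $0$ pointwise on $(0,T)$ by hypothesis and is dominated by the fixed integrable function $t\mapsto 2M\,t^{-\theta}$, so the dominated convergence theorem applies. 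Consequently $\Vert f_n-f\Vert_{\theta,1,T}\to 0$ if and only if $[f_n-f]_{\theta,1}\to 0$, as claimed.

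I do not anticipate any genuine obstacle here; the only point requiring care is the vanishing of the singular-at-zero term $\int_0^T t^{-\theta}|f_n(t)-f(t)|\,dt$, and that is precisely where the uniform boundedness of $(f_n)$ is used, namely to furnish the dominating function $2M\,t^{-\theta}$. Without that hypothesis both this step and the statement would fail.
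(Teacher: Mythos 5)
Your proposal is correct and follows essentially the same route as the paper: the first term of the norm \eqref{eq:norm} is controlled by boundedness and the integrability of $t^{-\theta}$, and the convergence statement reduces to dominated convergence with dominating function $C\,t^{-\theta}$. Your explicit remark that the second term of \eqref{eq:norm} equals $\tfrac12[f]_{\theta,1}$ by symmetry is a small clarification the paper leaves implicit, but it does not change the argument.
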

\begin{proof}
Since $f$ is bounded, the first term in \eqref{eq:norm} is bounded. Thus the result follows from the very definitions of $W_{\theta,1}$ and $[f]_{\theta,1}$.
Similarly, the second assertion follows easily from Lebesgue dominated convergence theorem. Indeed, since $f_n$ and $f$ are bounded, it follows that 
$$
\int_0^T |f_n(t)-f(t)|t^{-\theta}dt \leq C\int_0^T t^{-\theta}dt < \infty,
$$
and thus pointwise convergence implies 
$$
\int_0^T |f_n(t)-f(t)|t^{-\theta}dt \to 0.
$$
Thus it suffices to consider only the second term in \eqref{eq:norm} which is $[\cdot]_{\theta,1}$.
\end{proof}
We also exploit the following proposition.
\begin{proposition}
\label{prop:forget-indicator}
Suppose that $f_n$ is uniformly bounded and $f_n \to 0$ pointwise. Set $f_{n,t} = f_n \textbf{1}_{\cdot \leq t}$. Then 
$[f_{n,t}]_{\theta,1} \to 0$ if and only if
$$
\int_0^t \int_0^t \frac{|f(s)-f(r)|}{|s-r|^{\theta+1}}dsdr \to 0.
$$
\end{proposition}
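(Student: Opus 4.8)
The plan is to evaluate the Gagliardo seminorm $[f_{n,t}]_{\theta,1}$ by splitting the square $[0,T]^2$ over which the double integral runs into regions determined by the indicator $\textbf{1}_{\cdot\le t}$. Writing $f_{n,t}(s) = f_n(s)\textbf{1}_{s\le t}$, the integrand $|f_{n,t}(s)-f_{n,t}(r)|$ equals $|f_n(s)-f_n(r)|$ when $s,r\le t$, vanishes when $s,r>t$, and equals $|f_n(s)|$ when $s\le t<r$ (and symmetrically $|f_n(r)|$ when $r\le t<s$). Hence
\[
[f_{n,t}]_{\theta,1} \weq A_n + B_n,
\]
where
\[
A_n \weq \int_0^t\int_0^t \frac{|f_n(s)-f_n(r)|}{|s-r|^{\theta+1}}\,ds\,dr
\quad\text{and}\quad
B_n \weq 2\int_0^t |f_n(s)|\left(\int_t^T\frac{dr}{(r-s)^{\theta+1}}\right)ds .
\]
Here $A_n$ is precisely the quantity appearing on the right-hand side of the asserted equivalence and $B_n\ge 0$. (If $t=T$ then $B_n=0$ and the claim is trivial, so one may assume $t<T$.)

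The second step is to show that $B_n\to 0$ unconditionally. Computing the inner integral gives $\int_t^T (r-s)^{-\theta-1}\,dr = \theta^{-1}\big((t-s)^{-\theta}-(T-s)^{-\theta}\big)\le \theta^{-1}(t-s)^{-\theta}$, so that $B_n \wle \tfrac{2}{\theta}\int_0^t |f_n(s)|(t-s)^{-\theta}\,ds$. Since the $f_n$ are uniformly bounded by some constant $M$ and $\theta\in(0,1)$, the integrands are dominated by the fixed function $M\theta^{-1}(t-s)^{-\theta}\in L^1([0,t])$; as $f_n\to 0$ pointwise, the dominated convergence theorem yields $B_n\to 0$. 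This is the same mechanism already used in the proof of Proposition \ref{prop:remove-first-term}.

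The equivalence is then immediate: because $A_n,B_n\ge 0$ and $B_n\to 0$, the identity $[f_{n,t}]_{\theta,1}=A_n+B_n$ shows that $[f_{n,t}]_{\theta,1}\to 0$ if and only if $A_n\to 0$. I do not anticipate a genuine obstacle here; the only points requiring a little care are the clean decomposition of the double integral into its three regions and the verification that $(t-s)^{-\theta}$ is integrable near $s=t$, which is exactly where the hypothesis $\theta<1$ is used.
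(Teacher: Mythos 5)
Your proof is correct and follows essentially the same route as the paper's: isolate the contribution of the indicator term and kill it by dominated convergence, using that $(t-s)^{-\theta}$ is integrable near $s=t$ because $\theta<1$. Your version is in fact slightly cleaner, since the exact region-by-region identity $[f_{n,t}]_{\theta,1}=A_n+B_n$ yields both directions of the equivalence at once, whereas the paper only records the one-sided triangle-inequality bound $|f_{n,t}(s)-f_{n,t}(r)|\le |f_n(s)-f_n(r)|\textbf{1}_{s\le t}+|f_n(r)|\,|\textbf{1}_{s\le t}-\textbf{1}_{r\le t}|$ and leaves the rest implicit.
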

\begin{proof}
We have
$$
|f_{n,t}(s) - f_{n,t}(r)| \leq |f_n(s)-f_n(r)|\textbf{1}_{s\leq t} + |f_n(r)||\textbf{1}_{s\leq t} - \textbf{1}_{r\leq t}|.
$$
Now 
$$
|\textbf{1}_{s\leq t} - \textbf{1}_{r\leq t}| = \textbf{1}_{s\leq t<r} + \textbf{1}_{r\leq t<s}
$$
which is integrable with respect to $|s-r|^{-\theta-1}dsdr$. The claim follows from this.
\end{proof}
\section{Pathwise integrals of discontinuously evaluated stochastic processes}
\label{sec:integration}
In this section we briefly recall and refine essential results from \cite{chen-et-al} that ensures the existence of pathwise integrals of type
$$
\int_0^T \sigma(X_s)dY_s,
$$
where $\sigma$ is of locally bounded variation and $X$ and $Y$ are suitable processes. The essential differences in our case are;
\begin{enumerate}
\item By defining the pathwise integrals using \eqref{eq:ZSIntegralSimple-definition}, we can drop the assumption that $\sigma(X_{0+})$ exists. As $\sigma$ contains discontinuous, this fact is crucial in order to study general SDEs \eqref{eq:SDE}.
\item In \cite{chen-et-al} the authors assumed the existence of density function Lebesgue almost everywhere for $X$ such that it has an integrable upper bound (cf. Example \ref{example:density}). In our case, this assumption is usually not satisfied whenever $\sigma$ attains zero at some points (cf. Example \ref{example:nualart}). Thus we work with the essential Assumption \ref{assumption:key} directly (see also Remark 3.3 in \cite{chen-et-al}).
\end{enumerate}
We begin with the following Proposition, taken from \cite{chen-et-al}.

\begin{proposition}[\cite{chen-et-al}]
\label{prop:key-estimate}
Let $f: \R \to \R$ be right-continuous and of finite variation. Let $x: [0,T] \to \R$ be H\"older continuous of order $\alpha > 0$.
Then the \emph{Gagliardo seminorm} defined in (\ref{eq:Gagliardo}) of order $\theta  \in (0,1)$ and exponent $p \in [1,\infty)$ of the composite path $f \circ x$ is bounded by
\begin{equation}
 \label{eq:GagliardoNonrandom}
  [f \circ x]_{\theta ,p}^p
  \wle 2^{p+1} (\theta  p)^{-1} \mu_f(K_x)^{p-1} [x]_{\alpha ,\infty}^{\theta  p/\alpha } \, \int_0^T  \int_{K_x} |x_t - y|^{-\theta  p/\alpha } \, \mu_f(dy) \, dt,
\end{equation}
where $K_x$ is the closure of the range of $x$.
\end{proposition}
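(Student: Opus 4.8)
The plan is to bound the Gagliardo seminorm of $f\circ x$ by estimating the increments $|f(x_t)-f(x_s)|$ directly. Since $f$ is right-continuous and of finite variation, we may write $f(b)-f(a) = \mu_f((a,b])$ (up to orientation) where $\mu_f$ is the signed measure associated to $f$; hence $|f(x_t)-f(x_s)| \le |\mu_f|\big((x_s\wedge x_t, x_s\vee x_t]\big)$, and by enlarging the interval, $|f(x_t)-f(x_s)| \le \int_{K_x} \textbf{1}\{y \text{ lies between } x_s \text{ and } x_t\}\,\mu_f(dy)$, where here and below $\mu_f$ denotes the total variation measure and $K_x$ the closure of the range of $x$. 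The key pointwise observation is that if $y$ lies between $x_s$ and $x_t$, then $|x_s-y|\le |x_s-x_t|$ and $|x_t-y|\le |x_s-x_t|$, so $\alpha$-H\"older continuity of $x$ gives $|x_s-y| \le [x]_{\alpha,\infty}|s-t|^\alpha$, and similarly with $t$.

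Next I would raise the increment bound to the $p$-th power. Using $|f(x_t)-f(x_s)|^p = |f(x_t)-f(x_s)|^{p-1}\cdot|f(x_t)-f(x_s)|$, I bound the first factor crudely by $\mu_f(K_x)^{p-1}$ (total mass) times a constant, and keep one copy of the integral representation for the remaining factor; more precisely, by Jensen/H\"older applied to the measure $\textbf{1}\{y\text{ between }x_s,x_t\}\mu_f(dy)$, one gets $|f(x_t)-f(x_s)|^p \le \mu_f(K_x)^{p-1}\int_{K_x}\textbf{1}\{y\text{ between }x_s,x_t\}\,\mu_f(dy)$ up to a constant. Then I insert the H\"older bound in the form $1 \le \big([x]_{\alpha,\infty}^{1/\alpha}|s-t|\, |x_t-y|^{-1/\alpha}\big)^{\theta p}$, valid precisely because $|x_t-y|\le [x]_{\alpha,\infty}|s-t|^\alpha$ on the region where the indicator is nonzero. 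This converts the factor $|s-t|^{-1-\theta p}$ in the definition \eqref{eq:Gagliardo} into $|s-t|^{-1+\theta p(1-1)}=|s-t|^{-1}$... so instead I keep a bit of room: write $|s-t|^{-1-\theta p}\cdot |s-t|^{\theta p} = |s-t|^{-1}$ is not integrable, so one actually splits the two symmetric H\"older estimates, using $|x_s-y|^{-\theta p/\alpha}$ near $s$ and $|x_t-y|^{-\theta p/\alpha}$ near $t$ and the factor $(\theta p)^{-1}$ arises from integrating $|s-t|^{-1+\theta p}$-type expressions after the symmetrization; this bookkeeping is where the explicit constant $2^{p+1}(\theta p)^{-1}$ comes from.

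Finally I would assemble the pieces: substituting into \eqref{eq:Gagliardo} and applying Fubini to exchange the $ds\,dt$ integration with the $\mu_f(dy)$ integration, the inner $s$-integral (or $t$-integral) of $|s-t|^{-1+\theta p}$ over $[0,T]$ contributes the factor $(\theta p)^{-1}$ together with a bounded constant, leaving exactly $[x]_{\alpha,\infty}^{\theta p/\alpha}\int_0^T\int_{K_x}|x_t-y|^{-\theta p/\alpha}\mu_f(dy)\,dt$ multiplied by $\mu_f(K_x)^{p-1}$ and absolute constants, which one checks are at most $2^{p+1}$. The main obstacle is the bookkeeping in the second and third steps: one must carefully symmetrize the roles of $s$ and $t$ so that each of the two singular factors $|x_s-y|^{-\theta p/\alpha}$, $|x_t-y|^{-\theta p/\alpha}$ appears paired with an integrable power of $|s-t|$, and simultaneously track the numerical constants to land at the stated $2^{p+1}(\theta p)^{-1}$; the measure-theoretic input ($f$ right-continuous of finite variation $\Rightarrow$ representable via $\mu_f$) and the H\"older-between-the-endpoints geometry are routine by comparison.
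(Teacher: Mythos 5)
This paper does not actually prove Proposition \ref{prop:key-estimate}; it is imported verbatim from \cite{chen-et-al}, so the comparison below is with the argument given there.

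Your first paragraph and the reduction $|f(x_t)-f(x_s)|^p\le \mu_f(K_x)^{p-1}\int_{K_x}\mathbf{1}\{y\ \text{between}\ x_s,x_t\}\,\mu_f(dy)$ are exactly the right ingredients, and they coincide with the route taken in \cite{chen-et-al}. The gap is in the decisive analytic step, where you convert the indicator into the factor $(\theta p)^{-1}[x]_{\alpha,\infty}^{\theta p/\alpha}|x_t-y|^{-\theta p/\alpha}$. Multiplying the integrand by $1\le\bigl([x]_{\alpha,\infty}^{1/\alpha}|s-t|\,|x_t-y|^{-1/\alpha}\bigr)^{\theta p}$ turns $|s-t|^{-1-\theta p}$ into the non-integrable $|s-t|^{-1}$, as you yourself observe; and the proposed repair --- ``splitting the two symmetric estimates'' and ``integrating $|s-t|^{-1+\theta p}$-type expressions'' --- does not produce the stated bound: distributing the singularity between $|x_s-y|$ and $|x_t-y|$ does not change the power of $|s-t|$, and an integrand of the form $|s-t|^{-1+\theta p}$ could only arise by inserting the ratio with exponent $2\theta p$, which yields $|x_t-y|^{-2\theta p/\alpha}$ rather than the claimed $|x_t-y|^{-\theta p/\alpha}$. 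As written, the final step fails.

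The missing idea is that one should not insert a ratio that is $\ge 1$, but instead use the indicator to restrict the domain of the $s$-integration. Your own geometric observation $|x_t-y|\le[x]_{\alpha,\infty}|t-s|^{\alpha}$ (valid whenever $y$ lies between $x_s$ and $x_t$) says that the indicator vanishes unless $|t-s|\ge r$, where $r:=\bigl([x]_{\alpha,\infty}^{-1}|x_t-y|\bigr)^{1/\alpha}$. Hence, for fixed $t$ and $y$,
\begin{equation*}
\int_0^T \frac{\mathbf{1}\{y\ \text{between}\ x_s,x_t\}}{|t-s|^{1+\theta p}}\,ds
\ \le\ \int_{\{s:\,|t-s|\ge r\}}|t-s|^{-1-\theta p}\,ds
\ \le\ 2\int_r^{\infty}u^{-1-\theta p}\,du
\ =\ \frac{2}{\theta p}\,[x]_{\alpha,\infty}^{\theta p/\alpha}\,|x_t-y|^{-\theta p/\alpha},
\end{equation*}
a tail integral of $u^{-1-\theta p}$, not a near-diagonal integral of $u^{-1+\theta p}$. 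Applying Fubini in $(y,t)$ then gives \eqref{eq:GagliardoNonrandom}, with the factor $2$ above and the elementary bookkeeping in the increment bound accounting for the constant $2^{p+1}$; no symmetrization between $|x_s-y|^{-\theta p/\alpha}$ and $|x_t-y|^{-\theta p/\alpha}$ is needed. With this replacement your outline becomes the proof in \cite{chen-et-al}.
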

The following theorem ensures the existence of the integral.
\begin{theorem}
\label{the:ZSIntegral}
Let $X$ and $Y$ be H\"older continuous random processes of orders $\alpha$ and $\eta$, respectively, such that $\alpha + \eta > 1$. Suppose that $X$ satisfies Assumption \ref{assumption:key}. Then for any $f: \R \to \R$ of locally finite variation, the pathwise integral
\[
 \int_0^T f(X_t) \, d Y_t
\]
exists almost surely in the sense of \eqref{eq:ZSIntegralSimple-definition}.
\end{theorem}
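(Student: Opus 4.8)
The plan is to reduce the existence of the integral to the two membership conditions required by Definition \ref{def:integral}, namely that $f\circ X\in W_{\theta,1}$ and $Y\in W_{1-\theta,\infty}$ for a suitable $\theta\in(0,1)$, and then to invoke the well-definedness statement of the proposition following that definition. First I would fix an exponent $\theta$ with $1-\eta<\theta\le \beta+\epsilon$, where $\beta,\epsilon$ are the constants furnished by Assumption \ref{assumption:key}; the interval is non-empty under the hypotheses (when $\eta\ge\alpha$ this is immediate from $\beta>1-\alpha\ge 1-\eta$, and in the central case $\eta=\alpha$ any $\theta\in(1-\alpha,\beta]$ works). For the driver, since $Y$ is $\eta$-Hölder and $\eta>1-\theta$, estimating the two terms defining $\|\cdot\|_{1-\theta,\infty}$ shows $Y\in W_{1-\theta,\infty}$ almost surely.

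The core of the argument is to show $f\circ X\in W_{\theta,1}$. Since $X$ is continuous on $[0,T]$ its range lies in a compact set and, replacing $f$ by its right-continuous modification so that Proposition \ref{prop:key-estimate} applies, $f\circ X$ is bounded. By Proposition \ref{prop:remove-first-term} it then suffices to prove that the Gagliardo seminorm $[f\circ X]_{\theta,1}$ is finite almost surely. Applying Proposition \ref{prop:key-estimate} with $p=1$ to the path $t\mapsto X_t$ yields
\begin{equation*}
[f\circ X]_{\theta,1}\wle \frac{4}{\theta}\,[X]_{\alpha,\infty}^{\theta/\alpha}\int_0^T\int_{K_X}|X_t-y|^{-\theta/\alpha}\,\mu_f(dy)\,dt,
\end{equation*}
where $\mu_f$ is the variation measure of $f$ and $K_X$ is the (random, compact) closure of the range of $X$. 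As $[X]_{\alpha,\infty}<\infty$ almost surely by Hölder continuity, everything reduces to the almost sure finiteness of the double integral.

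The main obstacle, and the place where Assumption \ref{assumption:key} enters, is precisely this almost sure finiteness, complicated by the fact that $K_X$ is random, so one cannot integrate against $\mu_f$ over a fixed deterministic set. I would handle this by localisation: on the event $A_N=\{\sup_{t\in[0,T]}|X_t|\le N\}$ one has $K_X\subseteq[-N,N]$, whence $\mu_f(K_X)\le\mu_f([-N,N])<\infty$ since $f$ has locally finite variation. Enlarging the inner domain to $[-N,N]$ and using Tonelli's theorem to exchange the expectation with the $dt\,\mu_f(dy)$ integration, it remains to control $\sup_{y}\E\int_0^T|X_t-y|^{-\theta/\alpha}\,dt$. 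Here the elementary inequality $|X_t-y|^{-\theta/\alpha}\le 1+|X_t-y|^{-(\beta+\epsilon)/\alpha}$, valid because $\theta\le\beta+\epsilon$, combined with \eqref{eq:sufficient-variability} gives a bound $T+C$ uniform in $y$. Consequently
\begin{equation*}
\E\Big[\mathbf{1}_{A_N}\int_0^T\int_{K_X}|X_t-y|^{-\theta/\alpha}\,\mu_f(dy)\,dt\Big]\wle (T+C)\,\mu_f([-N,N])<\infty,
\end{equation*}
so the double integral is finite almost surely on each $A_N$; since $X$ is almost surely bounded on $[0,T]$, the events $A_N$ exhaust the probability space as $N\to\infty$, yielding finiteness almost surely.

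Putting the pieces together, $f\circ X\in W_{\theta,1}$ and $Y\in W_{1-\theta,\infty}$ almost surely, so by the proposition following Definition \ref{def:integral} the integral $\int_0^T f(X_t)\,dY_t$ in the sense of \eqref{eq:ZSIntegralSimple-definition} is well defined almost surely, which is the claim. I expect the only genuinely delicate points to be the interchange of expectation and integration justified by Tonelli (all integrands being non-negative) and the bookkeeping needed to pass from the expectation estimate on $A_N$ to the pathwise statement; the choice of $\theta$ balancing the regularity of $Y$ against the integrability exponent supplied by Assumption \ref{assumption:key} is exactly what renders the two constraints compatible.
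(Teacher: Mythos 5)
Your proof is correct and follows essentially the same route as the paper's: the paper's (much terser) argument likewise uses Definition \ref{def:integral} to dispense with the existence of $f(X_{0+})$, localises so that $\mu_f$ has compact support, and combines Proposition \ref{prop:remove-first-term} with Proposition \ref{prop:key-estimate} at $p=1$ and Assumption \ref{assumption:key}; you have simply filled in the Tonelli/localisation bookkeeping that the paper delegates to the proof of Theorem 3.1 in \cite{chen-et-al}. The one caveat, shared equally by the paper's own proof (which takes $\theta=\beta$ and hence tacitly needs $\eta>1-\beta$), is that non-emptiness of your window $(1-\eta,\beta+\epsilon]$ is only verified for $\eta\ge\alpha$, which is the case relevant to the SDE application but not the full generality of the statement.
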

\begin{proof}
The claim follows by following the arguments of the proof of Theorem 3.1 in \cite{chen-et-al} and using Proposition \ref{prop:key-estimate}. Indeed, using Definition \ref{def:integral} allows us to drop the assumption that $f(X_{0+})$ exists, and by the localisation argument we may suppose that $\mu_f$ has compact support. Then the claim $f \circ x \in W_{1,\beta}$ follows from Proposition \ref{prop:remove-first-term} and Proposition \ref{prop:key-estimate} applied with $p=1$ and $\theta=\beta$ together with Assumption \ref{assumption:key}. 
\end{proof}

In order to extend several other key results of \cite{chen-et-al} we need the following simple lemma.
\begin{lemma}
\label{lma:very-simple}
Suppose $X$ satisfies Assumption \ref{assumption:key}. Then for any $\delta\in(0,\beta)$ we also have
$$
\sup_{y\in\R} \int_0^T \E|X_t-y|^{-\frac{\beta-\delta}{\alpha}}dt < \infty.
$$ 
\end{lemma}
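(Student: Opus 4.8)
The plan is to reduce the statement to an integrability fact that is uniform in $y$, and the key observation is that a bound on the exponent $-\frac{\beta+\epsilon}{\alpha}$ controls all smaller exponents provided we can also bound the contribution where $|X_t-y|$ is large. First I would split the time-space integral according to whether $|X_t-y|\leq 1$ or $|X_t-y|>1$. On the region $\{|X_t-y|>1\}$ we have $|X_t-y|^{-\frac{\beta-\delta}{\alpha}}\leq 1$ since $\frac{\beta-\delta}{\alpha}>0$, so that part contributes at most $T$, uniformly in $y$. On the region $\{|X_t-y|\leq 1\}$ we have $\frac{\beta-\delta}{\alpha}<\frac{\beta+\epsilon}{\alpha}$, hence $|X_t-y|^{-\frac{\beta-\delta}{\alpha}}\leq |X_t-y|^{-\frac{\beta+\epsilon}{\alpha}}$, and that part is dominated by the quantity appearing in Assumption \ref{assumption:key}.

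Concretely, I would write
\begin{equation*}
\E|X_t-y|^{-\frac{\beta-\delta}{\alpha}}
= \E\left[|X_t-y|^{-\frac{\beta-\delta}{\alpha}}\textbf{1}_{|X_t-y|\leq 1}\right]
+ \E\left[|X_t-y|^{-\frac{\beta-\delta}{\alpha}}\textbf{1}_{|X_t-y|>1}\right],
\end{equation*}
bound the first term by $\E|X_t-y|^{-\frac{\beta+\epsilon}{\alpha}}$ and the second by $1$, then integrate over $t\in[0,T]$ and take the supremum over $y$. This gives
\begin{equation*}
\sup_{y\in\R}\int_0^T \E|X_t-y|^{-\frac{\beta-\delta}{\alpha}}dt
\ \leq\ \sup_{y\in\R}\E\int_0^T |X_t-y|^{-\frac{\beta+\epsilon}{\alpha}}dt + T \ <\ \infty,
\end{equation*}
where the last inequality is exactly Assumption \ref{assumption:key} together with Fubini/Tonelli to exchange $\E$ and the time integral (legitimate since the integrand is non-negative).

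There is essentially no obstacle here; the only point requiring a word of care is that the split uses $\delta\in(0,\beta)$ merely to guarantee $\frac{\beta-\delta}{\alpha}>0$ (so the large-values part is genuinely bounded) and the inequality $\beta-\delta<\beta+\epsilon$ is automatic. In particular the conclusion does not actually need $\beta-\delta$ to stay above $1-\alpha$, so the lemma holds for the full range $\delta\in(0,\beta)$ as stated. The role of this lemma downstream is to provide a small amount of room in the exponent when passing to limits or applying Proposition \ref{prop:key-estimate}, so only the uniform finiteness matters, not the precise constant.
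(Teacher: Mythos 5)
Your proof is correct and follows essentially the same route as the paper: both split according to whether $|X_t-y|$ is at most or greater than $1$, bound the large-distance part by a constant, and dominate the small-distance part by the integrand of Assumption \ref{assumption:key}. No issues.
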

\begin{proof}
The claim follows directly from the observation
$$
|X_t-y|^{-\frac{\beta-\delta}{\alpha}} = \textbf{1}_{|X_t-y|\geq 1}|X_t-y|^{-\frac{\beta-\delta}{\alpha}} + \textbf{1}_{|X_t-y|<1}|X_t-y|^{-\frac{\beta-\delta}{\alpha}} \leq 1 +|X_t-y|^{-\frac{\beta}{\alpha}}.
$$
\end{proof}
Using Lemma \ref{lma:very-simple} the following result follows essentially from the proof of Lemma A.3 in \cite{chen-et-al}. For this reason we present only the main differences.
\begin{lemma}
\label{lemma:MeanContinuity}
Let $X$ satisfy Assumption \ref{assumption:key}. Then for any $\theta \in (1-\beta,\beta)$ and $q \ge \theta/\alpha$, the functions
\[
 y \ \mapsto \
 \E \left\{ (1+[X]_{\alpha,\infty})^{-q} \int_0^T \int_0^t \frac{\textbf{1}_{X_s < y < X_t}}{(t-s)^{1+\theta}} \, ds dt \right\}
\]
and
\[
 y \ \mapsto \
 \E \left\{ (1+[X]_{\alpha,\infty})^{-q} \int_0^T \int_0^t \frac{\textbf{1}_{X_t < y < X_s}}{(t-s)^{1+\theta}} \, ds dt \right\}
\]
are bounded and continuous.
\end{lemma}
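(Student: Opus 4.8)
The goal is to establish boundedness and continuity in $y$ of the two expectations, which are symmetric under swapping the roles of $X_s$ and $X_t$, so I would focus on the first one and remark that the second follows identically. The plan is to recognize that each integrand counts (in a weighted way) the upcrossings and downcrossings of the level $y$ by the path $X$, and that these crossing integrals are controlled by the Gagliardo-type seminorm of the composite indicator $\textbf{1}_{X_\cdot < y}$. The natural strategy is to follow the proof of Lemma A.3 in \cite{chen-et-al}, whose only obstruction to applying verbatim is that its hypotheses are phrased via an integrable density bound (cf. Example \ref{example:density}), whereas we must work directly from Assumption \ref{assumption:key}.

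For boundedness, I would first bound the indicator $\textbf{1}_{X_s < y < X_t}$ pointwise by $\textbf{1}_{X_s < y} - \textbf{1}_{X_t < y}$ in absolute value, or more precisely note that $\textbf{1}_{X_s<y<X_t}+\textbf{1}_{X_t<y<X_s} = |\textbf{1}_{X_s<y}-\textbf{1}_{X_t<y}|$, which is exactly the increment of the composite function $f\circ X$ with $f(x)=\textbf{1}_{x<y}$. Since $f$ is of bounded variation with $\mu_f$ a single unit mass at $y$, Proposition \ref{prop:key-estimate} applied with $p=1$ and $\theta$ as given yields
$$
\int_0^T\int_0^T \frac{|\textbf{1}_{X_s<y}-\textbf{1}_{X_t<y}|}{|t-s|^{1+\theta}}\,ds\,dt \wle C\,[X]_{\alpha,\infty}^{\theta/\alpha}\int_0^T |X_t-y|^{-\theta/\alpha}\,dt.
$$
Multiplying by $(1+[X]_{\alpha,\infty})^{-q}$ with $q\geq\theta/\alpha$ absorbs the $[X]_{\alpha,\infty}^{\theta/\alpha}$ factor, leaving an integral whose expectation is controlled by Lemma \ref{lma:very-simple}: since $\theta\in(1-\beta,\beta)$ we have $\theta/\alpha < \beta/\alpha$, and choosing $\delta>0$ so that $\theta/\alpha = (\beta-\delta)/\alpha$ gives a finite bound uniform in $y$. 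This handles boundedness.

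For continuity in $y$, the plan is to fix $y_n \to y$ and apply dominated convergence. The pointwise convergence of the integrand follows because $\textbf{1}_{X_s<y_n<X_t}\to \textbf{1}_{X_s<y<X_t}$ for almost every $(s,t)$ whenever the path $X$ does not linger exactly at level $y$, and the exceptional set has measure zero by the same variability that underlies Assumption \ref{assumption:key}. The dominating function is supplied by a uniform-in-$y$ integrable majorant: replacing $|X_t-y_n|^{-\theta/\alpha}$ by its supremum over $n$ and invoking Lemma \ref{lma:very-simple} with a slightly smaller exponent (so that the bound $1+|X_t-y|^{-\beta/\alpha}$ from that lemma applies) produces an integrable dominating random variable after multiplication by $(1+[X]_{\alpha,\infty})^{-q}$. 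I expect the main obstacle to be the pointwise convergence step: one must argue carefully that the level sets $\{t:X_t=y\}$ contribute negligibly, i.e. that the boundary indicator changes on a null set, which is precisely where the sufficient variability in Assumption \ref{assumption:key} does the work in place of the explicit density assumption used in \cite{chen-et-al}; the remaining estimates are routine once Lemma \ref{lma:very-simple} and Proposition \ref{prop:key-estimate} are in hand.
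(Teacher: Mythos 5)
Your treatment of boundedness is correct and is essentially the paper's own argument: the crossing indicator is controlled via Proposition \ref{prop:key-estimate} (applied with $p=1$ and $\mu_f$ a unit mass at $y$), the factor $[X]_{\alpha,\infty}^{\theta/\alpha}$ is absorbed by $(1+[X]_{\alpha,\infty})^{-q}$ because $q\ge\theta/\alpha$, and Lemma \ref{lma:very-simple} with $\delta=\beta-\theta$ gives a bound uniform in $y$. That half needs no changes.

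The continuity half, however, has a genuine gap: the dominated convergence scheme cannot be closed, and this is exactly the point where the paper uses a different device. First, $\sup_n|X_t-y_n|^{-\theta/\alpha}$ is not a majorant of the integrand $\textbf{1}_{X_s<y_n<X_t}(t-s)^{-1-\theta}$; expressions of the form $|X_t-y_n|^{-\theta/\alpha}$ only arise \emph{after} integrating out $s$, so the best one could hope to dominate is $F_n(t)=\int_0^t \textbf{1}_{X_s<y_n<X_t}(t-s)^{-1-\theta}\,ds \le \theta^{-1}[X]_{\alpha,\infty}^{\theta/\alpha}|X_t-y_n|^{-\theta/\alpha}$. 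Second, and decisively, the proposed majorant $\sup_n|X_t-y_n|^{-\theta/\alpha}=d(X_t,S)^{-\theta/\alpha}$ with $S=\{y_n\}$ need not be integrable: Assumption \ref{assumption:key} and Lemma \ref{lma:very-simple} bound $\E\int_0^T|X_t-z|^{-\theta/\alpha}\,dt$ uniformly over \emph{single} levels $z$, and such a bound does not pass through a supremum over infinitely many levels inside the expectation (bounding the supremum by the sum diverges, since the individual terms do not tend to zero). Concretely, take $y_n=y+1/n$ and $X$ a fractional Brownian motion: on the gap between $y_{n+1}$ and $y_n$ one has $\int d(x,S)^{-\theta/\alpha}\,dx \approx n^{-2(1-\theta/\alpha)}$, so $\E\int_0^T d(X_t,S)^{-\theta/\alpha}\,dt \gtrsim \sum_n n^{-2+2\theta/\alpha}=\infty$ whenever $\theta\ge\alpha/2$, a range of $\theta$ the lemma must cover. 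This is precisely why the paper, following Lemma A.3 of \cite{chen-et-al}, replaces domination by \emph{uniform integrability}: it proves a uniform-in-$\epsilon$ $L^p$ bound, $p>1$, for cross terms of the form $\Phi_{1,\epsilon}(s,t)=(1+[X]_{\alpha,\infty})^{-q}\,\textbf{1}_{y\le X_s<y+\epsilon<X_t}\,(t-s)^{-1-\theta}$, choosing $p\in\left(1,\tfrac{1+\beta}{1+\theta}\right)$ with $\tfrac1p\ge 1+\theta-\alpha q$, so that raising to the power $p$ reproduces an expression of the same type with new exponents $\tilde\theta=(1+\theta)p-1<\beta$ and $\tilde q=pq\ge\tilde\theta/\alpha$, to which the boundedness computation and Lemma \ref{lma:very-simple} again apply. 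Uniform integrability, combined with the a.e.\ vanishing of the limiting indicators (the set $\{X_t=y\}$ is indeed $dt\otimes\Pro$-null by Assumption \ref{assumption:key}, as you correctly observe), then yields continuity. So your diagnosis of where the difficulty sits is right, but dominated convergence cannot deliver the conclusion; the $L^p$/uniform-integrability argument is the missing idea.
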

\begin{proof}
Denote $\phi(y) = \E \Phi(y)$, where
\[
 \Phi(y) \weq (1+[X]_{\alpha,\infty})^{-q} \int_0^T \int_0^t \frac{1_{(X_s,X_t)}(y)}{(t-s)^{1+\theta}} \, ds dt.
\]
Then 
\[
 \Phi(y)
 \wle \theta^{-1} \frac{[X]_{\alpha,\infty}^{\theta/\alpha}}{(1+[X]_{\alpha,\infty})^{q}} \, \int_0^T |X_t-y|^{-\theta/\alpha} \, dt
 \wle \theta^{-1} \int_0^T |X_t-y|^{-\theta/\alpha} \, dt.
\]
Thus using Lemma \ref{lma:very-simple} with $\delta = \beta- \theta$ we obtain that $\phi(y)$ is bounded. Similarly, we observe that $\phi(y)$ is right-continuous as long as we are able to show that for small enough $p>1$ we have
\begin{equation}
\label{eq:needed}
\sup_{\epsilon>0} \E \int_0^T \int_0^t \Phi_{1,\epsilon}(s,t)^p \, ds dt
< \infty,
\end{equation}
where
$$
 \Phi_{1,\epsilon}(s,t)
 \weq (1+[X]_{\alpha,\infty})^{-q} \, \frac{ \textbf{1}_{y \le X_s < y + \epsilon < X_t} }{(t-s)^{1+\theta}}.
$$
We choose $p \in (1, \frac{1+\beta}{1+\theta})$ so small that $1/p \ge 1 + \theta - \alpha q$.
Then
\[
 \Phi_{1,\epsilon}(s,t)^p
 \weq (1+[X]_{\alpha,\infty})^{-\tilde q} \, \frac{ \textbf{1}_{y \le X_s < y + \epsilon < X_t} }{(t-s)^{1+\tilde\theta}}
 \wle (1+[X]_{\alpha,\infty})^{-\tilde q} \, \frac{ \textbf{1}_{X_s < y + \epsilon < X_t} }{(t-s)^{1+\tilde\theta}},
\]
where $\tilde q = pq$ and $\tilde \theta = (1+\theta)p - 1$. Now our choice of $p$ implies that $\tilde q \ge \tilde \theta/\alpha$. Thus, as above, we obtain
$$
(1+[X]_{\alpha,\infty})^{-\tilde q} \, \int_0^T \int_0^t \frac{ \textbf{1}_{X_s < y + \epsilon < X_t} }{(t-s)^{1+\tilde\theta}}dsdt \leq \tilde\theta^{-1}\int_0^T |X_t-y-\epsilon|^{-\tilde\theta/\alpha}dt,
$$ 
from which \eqref{eq:needed} follows by noting that with our choice of $p$ we have $\tilde\theta < \beta$ and applying Lemma \ref{lma:very-simple}. The rest of the proof follows as in \cite{chen-et-al}.
\end{proof}

With Lemma \ref{lemma:MeanContinuity} at hand, the following result follows directly by following the proof of Proposition A.1 of \cite{chen-et-al}. For this reason, we omit the details.
\begin{proposition}
\label{prop:key-estimate-approx}
Suppose $X$ satisfies Assumption \ref{assumption:key} and let $f$ be of locally finite variation. Let $f_n$ be a standard smooth approximation of $f$. Then for any $\theta \in (0,\beta)$,
\[
 [f_n \circ X - f \circ X]_{\theta,1} \rightarrow 0.
\]
\end{proposition}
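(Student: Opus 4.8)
The plan is to mirror the proof of Proposition A.1 in \cite{chen-et-al}, the essential new difficulty being that $f$ may have jumps. Write $g_n = f_n - f$ and let $\mu_f$ denote the (signed) Stieltjes measure of $f$; here $f_n = \rho_n * f$ is the mollification, so that $f_n'=\rho_n*\mu_f$ as a density and $|f_n'|\le \rho_n*|\mu_f|$. Fix $q\ge \theta/\alpha$ as in Lemma \ref{lemma:MeanContinuity}. I would prove the stronger statement
\[
\E\left[(1+[X]_{\alpha,\infty})^{-q}\,[g_n\circ X]_{\theta,1}\right]\ \longrightarrow\ 0,
\]
which, since the weight is positive and finite almost surely, yields $[f_n\circ X - f\circ X]_{\theta,1}\to 0$ in probability. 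By the usual localisation (the range $K_X$ is almost surely compact) I may and do assume that $\mu_f$ has compact support, hence finite total mass; replacing $f$ by its right-continuous modification, which differs from $f$ only on the countable jump set, changes nothing.

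The argument is a Vitali convergence argument on the finite measure space $(\Omega\times\{0\le s<t\le T\},\ \Pro\otimes ds\,dt)$ applied to the integrands
\[
I_n\weq (1+[X]_{\alpha,\infty})^{-q}\,\frac{|g_n(X_t)-g_n(X_s)|}{(t-s)^{1+\theta}}.
\]
First I would establish $I_n\to 0$ almost everywhere. The jump set $A$ of $f$, i.e. the set of atoms of $\mu_f$, is countable, and Assumption \ref{assumption:key} forces $\int_0^T\Pro(X_t=y)\,dt=0$ for every $y$ (otherwise the supremum in \eqref{eq:sufficient-variability} would be infinite); summing over $A$ and using Fubini, for almost every $(\omega,s,t)$ neither $X_s$ nor $X_t$ lies in $A$. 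Since $f_n\to f$ at every continuity point of $f$, it follows that $g_n(X_t)-g_n(X_s)\to 0$, and hence $I_n\to 0$, for almost every $(\omega,s,t)$.

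The main work, and the step I expect to be the main obstacle, is the uniform integrability of $\{I_n\}$. The naive route fails precisely because of the jumps: applying Proposition \ref{prop:key-estimate} to $g_n$ would require $|\mu_{f_n}-\mu_f|$ of a compact set to vanish, but the total variation distance between an atom and its mollification does not tend to zero. Instead I would obtain uniform integrability from a uniform $L^p$ bound, choosing $p\in(1,\tfrac{1+\beta}{1+\theta})$ exactly as in the proof of Lemma \ref{lemma:MeanContinuity}. Using the elementary estimate $|h(b)-h(a)|\le |\mu_h|((a\wedge b,a\vee b])$ for a function $h$ of bounded variation and the triangle inequality $|g_n(X_t)-g_n(X_s)|\le|f_n(X_t)-f_n(X_s)|+|f(X_t)-f(X_s)|$, the $f$-term reproduces verbatim the $n$-independent bound behind Lemma \ref{lemma:MeanContinuity}, while for the $f_n$-term one has $\int_{X_s\wedge X_t}^{X_s\vee X_t}(\rho_n*|\mu_f|)(y)\,dy\le |\mu_f|\big((X_s\wedge X_t-a_n,\ X_s\vee X_t+a_n]\big)$, where $a_n\to 0$ is the support radius of $\rho_n$. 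Raising $I_n^p$, integrating, and repeating the computation leading to \eqref{eq:needed} (Fubini in $(s,t)$ against $|\mu_f|(dy)$, then the H\"older bound $|X_t-y|\le [X]_{\alpha,\infty}|t-s|^\alpha$ converting the time integral into $|X_t-y|^{-\tilde\theta/\alpha}$ with $\tilde\theta=(1+\theta)p-1<\beta$), one arrives at a bound by $\sup_{y}\int_0^T\E|X_t-y|^{-\tilde\theta/\alpha}\,dt$, finite by Lemma \ref{lma:very-simple}. The only new feature is the enlargement of the integration interval by $a_n$; this is harmless because the supremum over $y$ is translation invariant, so the resulting bound is uniform in $n$.

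With $I_n\to 0$ in measure and $\sup_n\E\iint_{s<t} I_n^p\,ds\,dt<\infty$ for some $p>1$, the Vitali convergence theorem on the finite measure space gives $\E\iint_{s<t} I_n\,ds\,dt\to 0$. Since this double integral equals $\tfrac12\E[(1+[X]_{\alpha,\infty})^{-q}\,[g_n\circ X]_{\theta,1}]$ by symmetry of the Gagliardo integrand, the displayed convergence follows, which completes the proof.
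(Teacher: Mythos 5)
Your strategy---a.e.\ convergence of the Gagliardo integrand plus a uniform $L^p$ bound and Vitali on the finite measure space---is a legitimate and more self-contained route than the paper's, which instead reduces the claim (following Proposition A.1 of \cite{chen-et-al}) to the boundedness and continuity of the expected weighted crossing functional established in Lemma \ref{lemma:MeanContinuity}, and then concludes by dominated convergence as the mollifier shifts the level by at most $a_n$. Your a.e.\ convergence step is correct: Assumption \ref{assumption:key} does force $X_t$ to avoid the countable atom set of $\mu_f$ for a.e.\ $(\omega,t)$. But the uniform $L^p$ bound, which you yourself identify as the main obstacle, fails as written. You bound the $f_n$-increment by $|\mu_f|\big((X_s\wedge X_t-a_n,\,X_s\vee X_t+a_n]\big)$ and assert that repeating the computation leading to \eqref{eq:needed} with the interval enlarged by $a_n$ is ``harmless''. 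It is not: that computation rests entirely on the crossing structure of the indicator. If $X_s<y<X_t$, then $|X_t-y|\le[X]_{\alpha,\infty}(t-s)^{\alpha}$, which gives the lower bound $t-s\ge\big([X]_{\alpha,\infty}^{-1}|X_t-y|\big)^{1/\alpha}$ that makes $\int_0^t(t-s)^{-1-\tilde\theta}\,\textbf{1}_{X_s<y<X_t}\,ds$ finite and produces $|X_t-y|^{-\tilde\theta/\alpha}$. The enlarged indicator $\textbf{1}_{X_s\wedge X_t-a_n<y\le X_s\vee X_t+a_n}$ has no such property: it equals $1$ for \emph{every} pair $(s,t)$ in any time interval during which $X$ stays within distance $a_n$ of $y$, and on that event $\iint(t-s)^{-1-\tilde\theta}\,ds\,dt=\infty$. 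Since with positive probability $X$ spends a positive-measure set of times in an $a_n$-neighbourhood of an atom of $\mu_f$, your bound on $\E\iint I_n^p\,ds\,dt$ is $+\infty$, not uniform in $n$. Translation invariance of $\sup_y$ handles \emph{shifted} levels; it does not handle \emph{enlarged} intervals.

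The gap is repairable, and the repair shows your plan does work: do not collapse the convolution into an enlarged interval. Either write $|f_n(X_t)-f_n(X_s)|\le\int_{\R}\rho_n(z)\,|\mu_f|\big((X_s\wedge X_t-z,\,X_s\vee X_t-z]\big)\,dz$ and apply Fubini in $z$, so that the indicator becomes a genuine crossing of the shifted level $y+z$; or, equivalently, stop at your first inequality $|f_n(X_t)-f_n(X_s)|\le\int_{\R}\textbf{1}_{X_s\wedge X_t<y<X_s\vee X_t}\,(\rho_n*|\mu_f|)(y)\,dy$ and apply Fubini against the measure $(\rho_n*|\mu_f|)(y)\,dy$, whose total mass equals $|\mu_f|(\R)$ independently of $n$. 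In both versions the crossing estimate applies verbatim and, after the $L^p$ trick $\nu(I)^p\le\nu(\R)^{p-1}\nu(I)$, yields a bound of the form $C\,|\mu_f|(\R)^{p}\,\sup_{y}\E\int_0^T|X_t-y|^{-\tilde\theta/\alpha}\,dt$, uniform in $n$ by Lemma \ref{lma:very-simple}; the remainder of your Vitali argument then goes through and gives the convergence in probability of $[f_n\circ X-f\circ X]_{\theta,1}$.
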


We also need the following result that provides us the solution candidate. The proof follows directly from the proof of Theorem 3.4 in \cite{chen-et-al} together with Proposition \ref{prop:key-estimate-approx}, and thus we omit the details.
\begin{theorem}
\label{the:Ito}
Assume that $X$ satisfies Assumption \ref{assumption:key} and let $f: \R \to \R$ be absolutely continuous, having a derivative $f'$ of locally finite variation. Then for any $t \in [0,T]$,
\begin{equation}
 \label{eq:Ito}
 f(X_t) - f(X_0)
 \weq \int_0^t f'(X_s) \, dX_s
\end{equation}
almost surely.
\end{theorem}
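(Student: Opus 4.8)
The plan is to prove \eqref{eq:Ito} by smooth approximation of $f$: first establish the identity for $C^1$ approximants $f_n$ via the classical change-of-variables formula, and then pass to the limit on both sides, the delicate point being convergence of the pathwise integrals on the right. I would fix $\theta \in (1-\alpha,\beta)$, which is possible because Assumption \ref{assumption:key} guarantees $\beta > 1-\alpha$, and take $f_n = f * \rho_n$ to be a mollification of $f$. Then $f_n$ is smooth and $f_n \to f$ uniformly on compact sets, while, since mollification commutes with differentiation, $f_n' = f' * \rho_n$ is precisely a standard smooth approximation of $f'$, which is of locally finite variation by hypothesis. As $X$ is $\alpha$-Hölder on $[0,T]$, its range lies in a compact set $K_X$, so only the restrictions of $f$ and $f'$ to $K_X$ are relevant and we may localise so that the associated variation measure has compact support.

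Next, for each fixed $n$ the function $f_n$ is $C^1$ with continuous derivative, so $f_n'\circ X$ is $\alpha$-Hölder, and the classical change-of-variables formula for $C^1$ functions of $\alpha$-Hölder paths with $\alpha>\frac12$ (the analogue of \eqref{eq:ito-intro} applied to $X$) gives
\[
f_n(X_t) - f_n(X_0) \weq \int_0^t f_n'(X_s)\,dX_s
\]
for every $t\in[0,T]$, almost surely, with the integral interpreted in the sense of \eqref{eq:ZSIntegralSimple-definition}. On the left side, uniform convergence $f_n \to f$ on $K_X$ yields $f_n(X_t)\to f(X_t)$ and $f_n(X_0)\to f(X_0)$ almost surely.

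The main step is the passage to the limit on the right. Using the bound \eqref{eq:ZSIntegralBound} of Proposition \ref{the:ZSIntegralBound} applied to the difference and writing $\int_0^t$ as integration of $\textbf{1}_{[0,t]}$ over $[0,T]$,
\[
\left| \int_0^t f_n'(X_s)\,dX_s - \int_0^t f'(X_s)\,dX_s \right|
\wle \frac{\bigl\| \textbf{1}_{[0,t]}\,(f_n'\circ X - f'\circ X)\bigr\|_{\theta,1}\,\|X\|_{1-\theta,\infty}}{\Gamma(\theta)\Gamma(1-\theta)}.
\]
Since $\theta<\beta$, Proposition \ref{prop:key-estimate-approx} yields $[f_n'\circ X - f'\circ X]_{\theta,1}\to 0$; combined with Proposition \ref{prop:remove-first-term} (to control the first term of the norm, the composites being uniformly bounded on $K_X$) and Proposition \ref{prop:forget-indicator} (to reinstate the indicator $\textbf{1}_{[0,t]}$ without destroying convergence), the full $W_{\theta,1}$-norm above tends to $0$. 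Because $\theta>1-\alpha$, the integrator satisfies $\|X\|_{1-\theta,\infty}<\infty$ almost surely, and the limiting integral $\int_0^t f'(X_s)\,dX_s$ exists by Theorem \ref{the:ZSIntegral}. Combining the three limits gives \eqref{eq:Ito} for each fixed $t$, and since both sides are continuous in $t$ the identity holds simultaneously for all $t\in[0,T]$ almost surely.

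The main obstacle is exactly this last convergence of integrals, which rests entirely on Proposition \ref{prop:key-estimate-approx} and hence on the variability Assumption \ref{assumption:key}. The subtlety is that $f'\circ X$ is in general only a member of $W_{\theta,1}$ and may be genuinely discontinuous, since $f'$ is merely of bounded variation while $X$ crosses every level uncountably often; consequently continuity of the integral must be understood with respect to the $W_{\theta,1}$-topology rather than any Hölder or uniform topology. It is precisely the quantity \eqref{eq:sufficient-variability} that controls the time $X$ spends near the discontinuities of $f'$ and thereby makes the approximation valid.
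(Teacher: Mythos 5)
Your proposal is correct and is essentially the paper's own argument written out in full: the paper's proof is a one-line deferral to the proof of Theorem 3.4 in \cite{chen-et-al} together with Proposition \ref{prop:key-estimate-approx}, i.e.\ exactly your scheme of mollifying $f$, applying the smooth chain rule for each $f_n$, and passing to the limit via the bound \eqref{eq:ZSIntegralBound} and the convergence $[f_n'\circ X - f'\circ X]_{\theta,1}\to 0$ with $\theta\in(1-\alpha,\beta)$. One small imprecision worth fixing: for fixed $n$ the chain rule needs $f_n'$ to be locally Lipschitz (which your smooth mollification does provide), not merely continuous as your phrase ``$C^1$ with continuous derivative, so $f_n'\circ X$ is $\alpha$-H\"older'' suggests.
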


\section{Proofs of main results}
\label{sec:proofs}
We begin with the proof of Theorem \ref{thm:existence} that is now rather easy, taking account the results obtained in Section \ref{sec:integration}.
\subsection{Proof of Theorem \ref{thm:existence}}
We claim that $X_t = \Lambda^{-1}(\Lambda(X_0) + Y_t-Y_0)$ is a solution to \eqref{eq:SDE}. For this we observe first that
$$
\left[\Lambda^{-1}\right]'(y) =\sigma(\Lambda^{-1}(y)).
$$
As, by assumption, the process $Z_t= \Lambda(X_0)+Y_t-Y_0$ satisfies
$$
\sup_{z\in\R} \int_0^T |Z_t-z|^{-\frac{\beta}{\alpha}}dt < \infty,
$$
the claim follows by Theorem \ref{the:Ito} as long as we show that $y\mapsto \sigma(\Lambda^{-1})(y) \in BV_{loc}$. But this follows trivially from the fact that $\Lambda^{-1}(y)$ is an increasing function. Indeed, by \cite{josephy} a composition $\sigma \circ f$ on a compact interval is of bounded variation for any bounded variation function $\sigma$ if and only if there exists $N\in\mathbb{N}$ such that for all $a,b\in\R$ the pre-image $f^{-1}([a,b])$ can be represented as a union of $N$ intervals. Now, since $\Lambda^{-1}$ is increasing, the pre-image of $[a,b]$ under $\Lambda^{-1}$ is just $[\Lambda(a),\Lambda(b)]$. Hence $y\mapsto \sigma(\Lambda^{-1})(y) \in BV_{loc}$, and Theorem \ref{the:Ito} gives
$$
\Lambda^{-1}(\Lambda(X_0) + Y_t-Y_0) = \Lambda^{-1}(X_0) + \int_0^t \sigma(\Lambda^{-1}(\Lambda(X_0) + Y_s-Y_0))dY_s
$$
providing us one solution. This concludes the proof.

\subsection{Proof of Theorem \ref{thm:uniqueness} and Corollary \ref{cor:uniqueness}}
The idea of the proof of Theorem \ref{thm:uniqueness} is that we approximate $\sigma$ with non-negative smooth functions $\sigma_n$ such that $\frac{1}{\sigma_n}$ is locally integrable. With the help of these functions we then prove that, for any $\epsilon$, the solution is unique up to the first time when $\sigma(X_t)\leq \epsilon$. Then the uniqueness follows from the fact that $\epsilon$ is arbitrary. Moreover, by the standard localisation argument, instead of only locally bounded variation function we may and will assume without loss of generality, that $\sigma$ is of bounded variation. Indeed, since $X$ is H\"older continuous, it follows $\sigma(X)$ can be always identified with $\sigma_K(X)$, where the compact set $K =K_X$ is the closure of the range of $X$ and $\sigma_K$ is of bounded variation, and coincides with $\sigma$ on $K$. This fact will be used throughout this section without explicitly stated. We also recall that any bounded variation function $f$ can be represented as a difference of two increasing functions $f_+$ and $f_-$, i.e.
\begin{equation}
\label{eq:decomposition}
f = f_+ - f_-.
\end{equation}
We split the proof into several lemmas and propositions. 
\begin{lemma}
\label{lemma:approx}
Let $\sigma$ satisfying Assumption \ref{assu:sigma} with a decomposition
$$
\sigma = \sigma^+ - \sigma^-,
$$
where both $\sigma^+$ and $\sigma^-$ are increasing. Let $\xi$ be an arbitrary random variable with infinitely differentiable density function that has support on $[0,1]$, and define
$$
\sigma_n(x) = \E\sigma^+(x + n^{-1}\xi) - \E \sigma^-(x-n^{-1}\xi).
$$
Then $\sigma_n(x)$ is infinitely differentiable and converges pointwise to $\sigma$. Moreover, we have 
$$
\sigma_n^{-1}(x) \leq \sigma^{-1}(x)
$$
and
$$
\sigma_n(x) \leq |\mu|([x-1,x+1])+\sigma(x-1),
$$
where $|\mu|([a,b])$ is the variation of $\sigma$ over $[a,b]$. 
In particular, $\sigma_n^{-1}$ is locally integrable, and $\sigma_n(x)$ is bounded on compacts.
\end{lemma}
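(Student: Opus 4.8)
The plan is to establish the four assertions---smoothness, pointwise convergence, the reciprocal bound, and the upper bound---essentially independently, as each follows from the monotonicity of $\sigma^+,\sigma^-$ combined with elementary mollification estimates. Throughout I write $\rho$ for the (smooth, $[0,1]$-supported) density of $\xi$, and I take $\sigma = \sigma^+ - \sigma^-$ to be the Jordan decomposition associated to \eqref{eq:decomposition}, so that $|\mu|([a,b]) = (\sigma^+(b)-\sigma^+(a)) + (\sigma^-(b)-\sigma^-(a))$; this minimality is what makes the total-variation bound below come out correctly.

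For smoothness I would rewrite each expectation as a convolution. Since $\E\sigma^+(x+n^{-1}\xi) = \int_0^1 \sigma^+(x+n^{-1}t)\rho(t)\,dt$, two changes of variables recast this as $(\sigma^+ * k_n)(x)$, where $k_n(u) = n\rho(-nu)$ is smooth with compact support in $[-n^{-1},0]$. As $\sigma^+$ is increasing it is locally bounded, hence in $L^1_{loc}$, so its convolution with the $C_c^\infty$ kernel $k_n$ is infinitely differentiable; the same applies to the $\sigma^-$ term, and $\sigma_n \in C^\infty$ follows. For pointwise convergence I would use that $n^{-1}\xi \downarrow 0$: for each $t>0$ we have $x+n^{-1}t \downarrow x$ and $x-n^{-1}t \uparrow x$, so by monotonicity and dominated convergence (the integrands being bounded by $\sigma^+(x+1)$ and $\sigma^-(x)$ for $n\ge 1$) the two expectations converge to the one-sided limits $\sigma^+(x+)$ and $\sigma^-(x-)$. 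At every continuity point of $\sigma$---all but countably many $x$---these equal $\sigma^+(x)$ and $\sigma^-(x)$, giving $\sigma_n(x) \to \sigma(x)$.

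The two bounds are then immediate from monotonicity. Since $\xi\ge 0$ we have $\sigma^+(x+n^{-1}\xi) \ge \sigma^+(x)$ and $\sigma^-(x-n^{-1}\xi) \le \sigma^-(x)$, whence $\sigma_n(x) \ge \sigma(x)$, and taking reciprocals (with the convention $1/0 = +\infty$) yields $\sigma_n^{-1}(x) \le \sigma^{-1}(x)$ everywhere. For the upper bound, using $n^{-1}\xi \le 1$ (valid for $n\ge 1$) and monotonicity gives $\sigma_n(x) \le \sigma^+(x+1) - \sigma^-(x-1)$; rewriting the right-hand side as $[\sigma^+(x+1)-\sigma^+(x-1)] + \sigma(x-1)$ and bounding the bracket by the total variation $|\mu|([x-1,x+1])$ produces the claimed estimate. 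Local integrability of $\sigma_n^{-1}$ follows from $\sigma_n^{-1} \le \sigma^{-1}$ together with Assumption \ref{assu:sigma}, while boundedness on compacts follows from the upper bound, since $\sigma$ is locally bounded and $|\mu|$ is finite on compact sets.

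All steps are routine; the only point requiring genuine care is the pointwise-convergence statement, where the asymmetric mollification forces the limits to be the one-sided values $\sigma^+(x+)$ and $\sigma^-(x-)$, so that equality with $\sigma(x)$ holds only at continuity points (hence almost everywhere, which is all that the later approximation results need). That same asymmetry is precisely what yields the inequality $\sigma_n \ge \sigma$, so it is an essential design feature rather than an artifact, and I would emphasize this when writing the argument.
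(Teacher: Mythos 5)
Your proof is correct and follows essentially the same route as the paper's: monotonicity of $\sigma^{\pm}$ together with $\xi\in[0,1]$ gives $\sigma_n\ge\sigma$ (hence $\sigma_n^{-1}\le\sigma^{-1}$ and local integrability) and $\sigma_n(x)\le\sigma^+(x+1)-\sigma^-(x-1)$, which the identity $|\mu|([a,b])=\sigma^+(b)-\sigma^+(a)+\sigma^-(b)-\sigma^-(a)$ converts into the claimed variation bound; your insistence on the Jordan (minimal) decomposition is apt, since that identity is exactly what the paper uses and it holds only for that choice. The one place where you improve on the paper is the convergence claim: the paper simply invokes dominated convergence and asserts pointwise convergence, whereas you correctly observe that the asymmetric mollification forces the limit to be $\sigma^+(x+)-\sigma^-(x-)$, so convergence to $\sigma(x)$ can fail at discontinuity points (e.g.\ $\sigma(x)=1+\mathbf{1}_{\{x>0\}}$ with $\sigma^+=\sigma$, $\sigma^-=0$ gives $\sigma_n(0)\to 2\neq 1=\sigma(0)$). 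Your almost-everywhere version is what is actually true, and, as you note, it is all that the downstream arguments need (the paper itself only invokes almost-everywhere convergence in the dominated-convergence step of Proposition \ref{prop:lambda-rep}), so this is a refinement of the lemma's statement rather than a gap in your proof.
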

\begin{proof}
Since $\xi$ has infinitely differentiable density, it follows that $\sigma_n$ is infinitely differentiable. Moreover, the pointwise convergence follows from Lebesgue dominated convergence theorem. For the last two assertions, we first observe that since $\xi$ is supported on $[0,1]$
and $\sigma^+$ is increasing, we have
$$
\E\sigma^+(x + n^{-1}\xi) \geq \sigma^+(x).
$$
Similarly, 
$$
\E\sigma^-(x - n^{-1}\xi) \leq \sigma^-(x),
$$
and thus
$$
\sigma_n(x) \geq \sigma(x). 
$$
Since both $\sigma_n$ and $\sigma$ are non-negative, this implies
$$
0\leq \int_0^x\frac{1}{\sigma_n(y)}dy \leq \int_0^x\frac{1}{\sigma(y)}dy < \infty.
$$
Finally, since $\xi\in[0,1]$, we get
$$
\sigma_n(x) \leq \sigma_+(x+1) - \sigma_-(x-1).
$$
Finally, from 
$$
|\mu|([a,b])=\sigma_+(b)-\sigma_+(a) + \sigma_-(b)-\sigma_-(a)
$$
it follows that
$$
\sigma_+(x+1) -\sigma_-(x-1)= \sigma_+(x+1)-\sigma_+(x-1) + \sigma_+(x-1)-\sigma_-(x-1) \leq |\mu|([x-1,x+1])+\sigma(x-1).
$$
Since $\sigma$ is of locally bounded variation and locally bounded, this implies that $\sigma_n(x)$ is locally bounded as well.
\end{proof}
\begin{proposition}
\label{prop:lambda_n-rep}
Let $X$ be a solution to \eqref{eq:SDE} satisfying Assumption \ref{assumption:key} and for $\epsilon>0$, set
$$
\tau_{\epsilon} = \inf\{t : \sigma(X_s)\leq \epsilon\}.
$$
Let $\sigma_n$ be as in Lemma \ref{lemma:approx} and let $\Lambda_n(x) = \int_0^x \sigma_n^{-1}(y)dy$. Then 
for any $t\in[0,\tau_{\epsilon}]$, we have
$$
\Lambda_n(X_t) = \Lambda_n(X_0) + \int_0^t \frac{\sigma(X_s)}{\sigma_n(X_s)}d Y_s.
$$
\end{proposition}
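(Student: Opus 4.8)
The plan is to recognise the claimed identity as the change-of-variables formula Theorem \ref{the:Ito} applied to $f=\Lambda_n$, followed by a substitution of the driving integral. First I would pin down the regularity of $\Lambda_n$. By Lemma \ref{lemma:approx} the function $\sigma_n$ is infinitely differentiable, non-negative, and has locally integrable reciprocal. A smooth non-negative function whose reciprocal is locally integrable cannot vanish: a zero would be a local minimum of value zero, and around such a point $1/\sigma_n$ blows up non-integrably (already for a zero of second order, and a fortiori for a flat zero). Hence $\sigma_n>0$ everywhere, so $\Lambda_n(x)=\int_0^x \sigma_n(y)^{-1}\,dy$ is continuously differentiable with $\Lambda_n'=1/\sigma_n$, and $1/\sigma_n$ is smooth, thus of locally finite variation. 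Therefore $\Lambda_n$ meets the hypotheses of Theorem \ref{the:Ito}, and since $X$ satisfies Assumption \ref{assumption:key} by hypothesis, that theorem gives, for every $t$,
\begin{equation*}
\Lambda_n(X_t)-\Lambda_n(X_0) \weq \int_0^t \frac{1}{\sigma_n(X_s)}\,dX_s .
\end{equation*}

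The remaining, and main, step is the substitution $\int_0^t \frac{1}{\sigma_n(X_s)}\,dX_s = \int_0^t \frac{\sigma(X_s)}{\sigma_n(X_s)}\,dY_s$, which uses that $X$ solves \eqref{eq:SDE}, that is $X_t=X_0+\int_0^t \sigma(X_s)\,dY_s$. This is the associativity (chain rule) of the generalised Lebesgue--Stieltjes integral: integrating against the indefinite integral $X$ amounts to integrating against $Y$ after inserting the density $\sigma(X)$. Both integrands are admissible, which I would check first: since $\sigma_n$ is continuous and strictly positive, it is bounded away from zero on the compact range $K$ of $X$, so $1/\sigma_n$ and $\sigma/\sigma_n$ are of bounded variation on $K$, and by Proposition \ref{prop:key-estimate} together with Assumption \ref{assumption:key} both $\tfrac{1}{\sigma_n}\circ X$ and $\tfrac{\sigma}{\sigma_n}\circ X$ lie in $W_{\theta,1}$ for some $\theta\in(1-\alpha,\beta)$, so that each side is well defined in the sense of Definition \ref{def:integral}. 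The identity itself I would obtain by a limiting argument anchored on the continuity bound \eqref{eq:ZSIntegralBound}: mollifying $\sigma$ and passing to the limit through Proposition \ref{prop:key-estimate-approx} reduces the equality to matching the local increments $\int_u^v \tfrac{1}{\sigma_n(X_s)}\,dX_s\approx \tfrac{1}{\sigma_n(X_u)}(X_v-X_u)$ with $X_v-X_u\approx \sigma(X_u)(Y_v-Y_u)$, whose summation over a shrinking partition reproduces the two integrals.

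The delicate point, and the one I expect to cost the most, is precisely this substitution: our integral is defined through the Weyl--Marchaud derivatives of \eqref{eq:ZSIntegralSimple-definition} rather than through Riemann sums, so the heuristic increment matching must be converted into a genuine limit statement compatible with Definition \ref{def:integral}, using only the continuity estimate \eqref{eq:ZSIntegralBound} and the convergence in Proposition \ref{prop:key-estimate-approx}. Finally I would remark that the restriction to $t\le\tau_\epsilon$ is immaterial for the identity, which holds on all of $[0,T]$; it is recorded here because on $[0,\tau_\epsilon]$ one has $\sigma(X_s)\ge \epsilon$ and, by Lemma \ref{lemma:approx}, $\sigma_n\ge\sigma$, whence $0\le \sigma(X_s)/\sigma_n(X_s)\le 1$. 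This uniform control is exactly what the later passage $n\to\infty$, leading to $\Lambda(X_t)=\Lambda(X_0)+Y_t-Y_0$, will exploit.
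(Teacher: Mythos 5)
Your opening steps are fine and essentially match the paper: the positivity of $\sigma_n$, the chain rule $\Lambda_n(X_t)-\Lambda_n(X_0)=\int_0^t\sigma_n(X_s)^{-1}\,dX_s$ (the paper obtains it from Young/Riemann--Stieltjes theory, citing \cite{zahle99}, rather than from Theorem \ref{the:Ito}, but either route works since $1/\sigma_n(X)$ is H\"older continuous), and the membership of $\tfrac{\sigma}{\sigma_n}\circ X$ in $W_{\theta,1}$. Your closing remark that the restriction $t\le\tau_\epsilon$ matters only for uniformity in $n$ is also correct: for fixed $n$ the smooth, strictly positive $\sigma_n$ is bounded away from zero on the compact range of $X$, so the paper's bound $\epsilon^{-1}$ could be replaced by an $n$-dependent constant.

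The genuine gap is in the substitution step $\int_0^t\sigma_n(X_s)^{-1}\,dX_s=\int_0^t\tfrac{\sigma(X_s)}{\sigma_n(X_s)}\,dY_s$, exactly where you place your heuristic. The increment matching $X_v-X_u\approx\sigma(X_u)(Y_v-Y_u)$ is not available here: it amounts to asserting that Riemann--Stieltjes sums $\sum_i\sigma(X_{u_i})(Y_{u_{i+1}}-Y_{u_i})$ converge to $\int_0^t\sigma(X_s)\,dY_s$, whereas for discontinuous $\sigma$ the composite $\sigma\circ X$ is not H\"older, in general not even of bounded $p$-variation (this is the central difficulty of the whole paper), and $\int\sigma(X)\,dY$ exists only in the fractional-derivative sense of Definition \ref{def:integral}, not as a limit of such sums. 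Mollifying $\sigma$ through Proposition \ref{prop:key-estimate-approx} does not repair this either: $X$ solves \eqref{eq:SDE} with the true $\sigma$, so you may not replace $\sigma$ by a mollification inside $dX_s=\sigma(X_s)\,dY_s$; that proposition controls integrands, not the relation between $dX$ and $dY$. The paper's resolution is to never approximate the bad factor $\sigma(X)$ at all: since $X$ solves the SDE, on every interval $[u_i,u_{i+1}]$ of a partition one has the \emph{exact} identity
$$
\frac{X_{u_{i+1}}-X_{u_i}}{\sigma_n(X_{u_i})} \ = \ \int_{u_i}^{u_{i+1}}\frac{\sigma(X_r)}{\sigma_n(X_{u_i})}\,dY_r,
$$
so that summing yields $\int_0^t\sigma(X_r)\sigma_n^{\pi}(r)\,dY_r$, where only the H\"older-continuous factor $1/\sigma_n(X)$ has been replaced by its piecewise-constant discretisation $\sigma_n^{\pi}$. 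The proof is then finished by showing $[\sigma(X)\sigma_n^{\pi}-\sigma(X)/\sigma_n(X)]_{\theta,1}\to 0$ as the mesh tends to zero --- one term is handled by Z\"ahle's result \cite{zahle99} that discrete approximations of H\"older functions converge in $[\cdot]_{\theta,1}$, applied to $1/\sigma_n(X)$, and the other by dominated convergence against the integrable bound $[\sigma\circ X]_{\theta,1}<\infty$ coming from Proposition \ref{prop:key-estimate} and Lemma \ref{lma:very-simple} --- after which the continuity estimate \eqref{eq:ZSIntegralBound} transfers this to convergence of the integrals. This device, discretising $1/\sigma_n(X)$ while leaving $\sigma(X)$ exact inside the $dY$-integral, is the idea missing from your plan, and without it the plan fails precisely for the discontinuous $\sigma$ the proposition is designed to cover.
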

\begin{proof}
By the definition of $\tau_\epsilon$ and Lemma \ref{lemma:approx}, we have, for any $s\in[0,\tau_\epsilon]$, that
$$
\frac{1}{\sigma_n(X_s)} \leq \frac{1}{\sigma(X_s)} \leq \epsilon^{-1}.
$$
This implies that 
$$
\left\vert \frac{1}{\sigma_n(X_s)} - \frac{1}{\sigma_n(X_r)}\right\vert \leq \epsilon^{-2}|\sigma_n(X_r)-\sigma_n(X_s)|,
$$
and thus
\begin{equation*}
\begin{split}
&\left\vert\frac{\sigma(X_s)}{\sigma_n(X_s)} - \frac{\sigma(X_r)}{\sigma_n(X_r)}\right\vert\\
& \leq \vert \sigma^{-1}_n(X_s)\vert \vert\sigma(X_s)-\sigma(X_r)\vert\\
&+ \vert \sigma(X_r)\vert \left\vert \frac{1}{\sigma_n(X_s)} - \frac{1}{\sigma_n(X_r)}\right\vert\\
&\leq \epsilon^{-1} \vert\sigma(X_s)-\sigma(X_r)\vert\\
&+ \vert \sigma(X_r)\vert \epsilon^{-2}\left\vert \sigma_n(X_s) - \sigma_n(X_r)\right\vert.
\end{split}
\end{equation*}
By Proposition \ref{prop:key-estimate} and Lemma \ref{lma:very-simple} we have, for $\theta\in(1-\alpha,\beta)$, that
$$
[\sigma \circ X]_{\theta,1} < \infty,
$$
i.e.
\begin{equation}
\label{eq:jelppi1}
\int_0^T \int_0^T \frac{|\sigma(X_r)-\sigma(X_s)|}{|r-s|^{\theta+1}}ds dr < \infty.
\end{equation}
Moreover, $\sigma \circ X$ is almost surely bounded. Thus, since $\sigma_n$ is Lipschitz continuous, we also have
$$
\int_0^T \int_0^T \frac{|\sigma(X_r)||\sigma_n(X_r)-\sigma_n(X_s)|}{|r-s|^{\theta+1}}ds dr \leq C\int_0^T \int_0^T \frac{|\sigma_n(X_r)-\sigma_n(X_s)|}{|r-s|^{\theta+1}}ds dr \leq C< \infty.
$$
Thus, by Proposition \ref{prop:remove-first-term},
$$
s \mapsto \frac{\sigma(X_s)\textbf{1}_{s\leq t}}{\sigma_n(X_s)} \in W_{\theta,1}.
$$
for every $t\leq \tau_\epsilon$. 
Note also that, by Lipschitz continuity of $\sigma_n$ and H\"older continuity of $X$, we have
$$
\Lambda_n(X_t) - \Lambda_n(X_0) = \int_0^t \frac{1}{\sigma_n(X_s)}dX_s.
$$
Here the integral exists in the sense of \ref{eq:ZSIntegralSimple-definition} as well as a Riemann--Stieltjes limit (see \cite{zahle99})
$$
\int_0^t \frac{1}{\sigma_n(X_s)}dX_s =\lim_n \sum_{k=1}^n \frac{X_{t_k}-X_{t_{k-1}}}{\sigma_n(X_{t_{k-1}})}.
$$

Therefore, denoting $\sigma_n^{\pi} (s) = \sum_{j=0}^{m-1} \frac{1_{[s_i,s_{i+1}]}(s)}{\sigma_n (x_{s_i})}$
and using that $x$ is a solution to \eqref{eq:SDE},
 we get
\begin{eqnarray}\label{eq21}
 \int_0^t \frac{d x_s}{\sigma_n (x_s)} &=& \lim\limits_{|\pi| \to 0} \sum_{j=0}^{m-1} \int_{s_i}^{s_{i+1}} \frac{\sigma(x_r)}{\sigma_n (x_{s_i})} d Y_r \notag\\
&=& \lim\limits_{|\pi| \to 0} \int^t_0 \sigma(x_r) \sigma_n^{\pi} (r) dY_r.
\end{eqnarray}
Thus it suffices to prove that, for any $t\leq \tau_\epsilon$, we have 
$$
\left\Vert \frac{\sigma(X_\cdot)\textbf{1}_{\cdot\leq t}}{\sigma_n(X_\cdot)} - \textbf{1}_{\cdot\leq t}\sigma(X_\cdot)\sigma_n^{\pi}(\cdot)\right\Vert_{\theta,1} \to 0.
$$
Moreover, in order to simplify the notation, Proposition \ref{prop:forget-indicator} implies that it suffices to consider integral over the region $0\leq s,r\leq t$ and drop the indicator term.
By Proposition \ref{prop:remove-first-term} together with the pointwise convergence and the fact that $\sigma(X_t)$ is almost surely bounded, it suffices to study the Gagliardo seminorm $[\cdot]_{\theta,1}$. We split
\begin{equation*}
\begin{split}
&\frac{\sigma(X_s)}{\sigma_n(X_s)} - \sigma(X_s)\sigma_n^{\pi}(s)-\frac{\sigma(X_r)}{\sigma_n(X_r)} - \sigma(X_r)\sigma_n^{\pi}(r)\\
&=\sigma(X_s)\left(\frac{1}{\sigma_n(X_s)}-\sigma_n^{\pi}(s)-\frac{1}{\sigma_n(X_r)}-\sigma_n^{\pi}(r)\right)\\
&+\left(\frac{1}{\sigma_n(X_r)}-\sigma_n^{\pi}(r)\right)\left(\sigma(X_s)-\sigma(X_r)\right).
\end{split}
\end{equation*}
Since $\sigma_n^{-1}(X_r) \leq \epsilon^{-1}$ for $r\in[0,\tau_\epsilon]$, it follows that  
$$
\left|\frac{1}{\sigma_n(X_r)}-\sigma_n^{\pi}(r)\right|\left|\sigma(X_s)-\sigma(X_r)\right| \leq 2\epsilon^{-1}\left|\sigma(X_s)-\sigma(X_r)\right|
$$
which is, by \eqref{eq:jelppi1}, integrable with respect to $|s-r|^{-\theta-1}ds dr$. Thus, again by Lebesgue dominated convergence Theorem, we have
$$
\int_0^t\int_0^t \left|\frac{1}{\sigma_n(X_r)}-\sigma_n^{\pi}(r)\right|\left|\sigma(X_s)-\sigma(X_r)\right||s-r|^{-\theta-1}drds \to 0.
$$
It remains to study the first term
$$
\int_0^t \int_0^t \left|\sigma(X_s)\left(\frac{1}{\sigma_n(X_s)}-\sigma_n^{\pi}(s)-\frac{1}{\sigma_n(X_r)}-\sigma_n^{\pi}(r)\right)\right||s-r|^{-\theta-1}dsdr.
$$
Now almost sure boundedness of $\sigma(X_s)$ implies
$$
|\sigma(X_s)|\left|\frac{1}{\sigma_n(X_s)}-\sigma_n^{\pi}(s)-\frac{1}{\sigma_n(X_r)}-\sigma_n^{\pi}(r)\right|\leq C\left|\frac{1}{\sigma_n(X_s)}-\sigma_n^{\pi}(s)-\frac{1}{\sigma_n(X_r)}-\sigma_n^{\pi}(r)\right|.
$$
To conclude, it was proved in \cite{zahle99} that for any H\"older continuous function $f$ we have
$$
[f-f_n]_{\theta,1} \to 0,
$$
where $f_n$ is the discrete approximation of $f$. Now $f_n=  \sigma_n^{\pi}(r)$ is a discrete approximation of H\"older continuous $f = \sigma^{-1}(X_s)$, and thus we observe that 
$$
\int_0^t \int_0^t \left|\frac{1}{\sigma_n(X_s)}-\sigma_n^{\pi}(s)-\frac{1}{\sigma_n(X_r)}-\sigma_n^{\pi}(r)\right||s-r|^{-\theta-1}ds dr \to 0.
$$
This concludes the proof.
\end{proof}
We also need the following elementary lemma.
\begin{lemma}
\label{lemma:UI}
Let $\mathcal{X}=(X,\mu,\Vert \cdot \Vert_\mu)$ be a normed space of functions. Let $f_n \in \mathcal{X}$ be a family of functions such that 
\begin{equation}
\label{eq:f_n-bound}
f_n \leq g_n + g,
\end{equation}
where $\Vert g_n\Vert_\mu \to 0$ and $g\in L^p(\mu)$ for some $p>1$. Then the family $f_n$ is uniformly integrable. In particular, if $f_n \to f \in \mathcal{X}$ pointwise, then 
$$
\Vert f_n-f\Vert_{\mu} \to 0.
$$ 
\end{lemma}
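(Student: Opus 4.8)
I read $\Vert\cdot\Vert_\mu$ as the $L^1(\mu)$-norm, so $\Vert h\Vert_\mu=\int_X|h|\,d\mu$ and $\mathcal{X}=L^1(\mu)$; as in all our applications I take $f_n\ge 0$ and $g_n,g$ integrable, so that each $f_n$ is integrable and $\sup_n\Vert f_n\Vert_\mu\le\sup_n\Vert g_n\Vert_\mu+\Vert g\Vert_\mu<\infty$ (the extra hypothesis $g\in L^p$, $p>1$, is used only for the quantitative small-set control below). The proof has the two parts the statement advertises: first establish uniform integrability via the tail criterion, then upgrade pointwise convergence to norm convergence. The one genuinely delicate point is that the dominating bound $g_n+g$ depends on $n$ and $g_n\to 0$ only in norm, not pointwise, so neither a fixed dominating function nor the plain dominated convergence theorem is directly available; the hypotheses $\Vert g_n\Vert_\mu\to 0$ and $g\in L^p(\mu)$, $p>1$, are exactly what bridges this gap.

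\textbf{Uniform integrability.} I would verify $\lim_{M\to\infty}\sup_n\int_{\{f_n>M\}}f_n\,d\mu=0$. Splitting along $f_n\le g_n+g$,
\[
\int_{\{f_n>M\}}f_n\,d\mu\ \le\ \int_{\{f_n>M\}}g_n\,d\mu+\int_{\{f_n>M\}}g\,d\mu.
\]
For the second term, Chebyshev's inequality with the uniform $L^1$-bound gives $\mu(\{f_n>M\})\le M^{-1}\sup_n\Vert f_n\Vert_\mu=:C/M$, and then H\"older's inequality with $g\in L^p$ yields $\int_{\{f_n>M\}}g\,d\mu\le\Vert g\Vert_{L^p}\,\mu(\{f_n>M\})^{1/q}\le\Vert g\Vert_{L^p}(C/M)^{1/q}$, which tends to $0$ as $M\to\infty$ uniformly in $n$ (here $p>1$ enters, via $q<\infty$). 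For the first term, given $\varepsilon>0$ pick $N$ with $\Vert g_n\Vert_\mu<\varepsilon$ for $n\ge N$, so $\int_{\{f_n>M\}}g_n\,d\mu\le\Vert g_n\Vert_\mu<\varepsilon$ there; for the finitely many $n<N$, integrability of $g_n$ together with $\mu(\{f_n>M\})\to 0$ forces $\int_{\{f_n>M\}}g_n\,d\mu\to 0$ as $M\to\infty$. Combining these gives $\limsup_{M\to\infty}\sup_n\int_{\{f_n>M\}}f_n\,d\mu\le\varepsilon$, and letting $\varepsilon\downarrow 0$ proves uniform integrability.

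\textbf{Norm convergence.} Assuming $f_n\to f$ pointwise with $f\in\mathcal{X}$, I would obtain $\Vert f_n-f\Vert_\mu\to 0$ by a subsequence argument feeding a generalised dominated convergence theorem (Pratt's lemma); this sidesteps the tightness and finiteness hypotheses that a direct appeal to the Vitali convergence theorem would require, since $\mu$ may be infinite (as in the Gagliardo setting, where $\mu(ds\,dr)=|s-r|^{-1-\theta}\,ds\,dr$). By the subsequence principle it suffices that every subsequence admits a further subsequence along which the norm tends to $0$. Given a subsequence, use $\Vert g_n\Vert_\mu\to 0$ to extract a further subsequence $(n_k)$ with $g_{n_k}\to 0$ $\mu$-a.e.; then $h_k:=g_{n_k}+g\to g$ a.e. with $\int h_k\,d\mu=\Vert g_{n_k}\Vert_\mu+\Vert g\Vert_\mu\to\Vert g\Vert_\mu$, while $0\le f_{n_k}\le h_k$ and $f_{n_k}\to f$ a.e. Passing to the limit in $f_{n_k}\le h_k$ gives $f\le g$ a.e., so $|f_{n_k}-f|\le h_k+g$ with $h_k+g\to 2g$ a.e. and $\int(h_k+g)\,d\mu\to 2\Vert g\Vert_\mu$; Pratt's theorem applied to $|f_{n_k}-f|\to 0$ then yields $\int|f_{n_k}-f|\,d\mu\to 0$, and the subsequence principle concludes $\Vert f_n-f\Vert_\mu\to 0$.

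\textbf{Main obstacle.} The only real difficulty is the $n$-dependence of the dominating function combined with the merely-in-norm convergence of $g_n$. It is resolved by separating the two mechanisms: the $L^p$-integrability of $g$ controls $\int_A g$ on small-measure sets through H\"older and delivers uniform integrability, whereas extracting an a.e.-convergent subsequence of $g_n$ converts the norm bound into the pointwise domination that Pratt's lemma needs. Crucially, no finiteness of $\mu$ is used, which is what makes the lemma applicable to the Gagliardo seminorm appearing throughout the uniqueness proof.
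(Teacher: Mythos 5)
Your proof is correct (under the reading you make explicit: $\Vert\cdot\Vert_\mu$ is the $L^1(\mu)$-norm, $f_n\geq 0$, and $g_n,g$ integrable), but it diverges from the paper's argument in both halves, most substantially in the second. For uniform integrability the ingredients are the same (Chebyshev plus H\"older via $g\in L^p$), but the decompositions differ: the paper splits the indicator as $\textbf{1}_{f_n>K}\leq\textbf{1}_{g_n>K/2}+\textbf{1}_{g>K/2}$ and applies Chebyshev to $g_n$, i.e.\ $\mu(g_n>K/2)\leq 2\Vert g_n\Vert_\mu/K$, which lets it bound $\Vert g\,\textbf{1}_{g_n>K/2}\Vert_\mu$ without ever needing a uniform $L^1$-bound on the $f_n$; you instead apply Chebyshev to $f_n$ itself, which is why you must first secure $\sup_n\Vert f_n\Vert_\mu<\infty$ and hence assume $g\in L^1(\mu)$ — an extra hypothesis the paper's route avoids, though it is harmless in the application. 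For the norm convergence the routes are genuinely different: the paper invokes the Vitali-type chain ``pointwise convergence $\Rightarrow$ convergence in measure, and convergence in measure plus uniform integrability $\Rightarrow$ $L^1$-convergence,'' which silently uses finiteness of $\mu$ (true in the application, where $d\mu=ds\,dr$ on $[0,t]^2$ and the singular weight $|s-r|^{-1-\theta}$ sits inside the functions, not the measure); you instead extract an a.e.-convergent subsequence of $(g_n)$ and apply Pratt's generalized dominated convergence theorem together with the subsequence principle. Your route buys validity on infinite measure spaces at the price of non-negativity and $g\in L^1$; the paper's buys a statement with fewer hypotheses at the price of implicitly assuming $\mu$ finite. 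One structural remark: in your write-up the uniform integrability, once proved, is never used — your Pratt argument is self-contained — so the two claims of the lemma become logically independent, whereas in the paper uniform integrability is precisely the engine that drives the convergence; this is worth flagging so a reader does not hunt for a dependence that is not there.
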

\begin{proof}
By the very definition of uniform integrability, we have to show that for each $\epsilon>0$ there exists $K$ such that 
$$
\sup_n\Vert f_n \textbf{1}_{f_n>K}\Vert_\mu < \epsilon.
$$
Let $\epsilon>0$ be arbitrary and let $K>0$ be a fixed number to be determined later. First we observe two elementary facts that, by \eqref{eq:f_n-bound}, we have
$$
\textbf{1}_{f_n>K} \leq \textbf{1}_{g_n+g>K}
$$
and 
$$
\textbf{1}_{g_n+g>K} \leq \textbf{1}_{g_n>\frac{K}{2}} + \textbf{1}_{g>\frac{K}{2}}.
$$
Thus, using \eqref{eq:f_n-bound} again, we get
\begin{equation*}
\begin{split}
f_n \textbf{1}_{f_n>K} &\leq (g_n+g)\textbf{1}_{f_n>K}\\
& \leq g_n + g \left(\textbf{1}_{g_n>\frac{K}{2}} + \textbf{1}_{g>\frac{K}{2}}\right). 
\end{split}
\end{equation*}
Triangle inequality gives us
\begin{equation*}
\begin{split}
\Vert f_n\textbf{1}_{f_n>K}\Vert_\mu &\leq \Vert g_n\Vert_\mu + \Vert g\textbf{1}_{g>\frac{K}{2}}\Vert_\mu+\Vert g\textbf{1}_{g_n>\frac{K}{2}}\Vert_\mu.
\end{split}
\end{equation*}
By H\"older and Chebyshev inequalities, we have, for conjugate $p,q$ such that $\frac{1}{p}+\frac{1}{q}=1$, that
$$
\Vert g\textbf{1}_{g_n>\frac{K}{2}}\Vert_\mu \leq \left( \int_X |g|^pd\mu\right)^{\frac{1}{p}} \left(\mu\left(g_n>\frac{K}{2}\right)\right)^{\frac{1}{q}} \leq 
\left( \int_X |g|^pd\mu\right)^{\frac{1}{p}}\Vert g_n\Vert_\mu^{\frac{1}{q}}\left(\frac{2}{K}\right)^{\frac{1}{q}}.
$$
Let now $N$ be large enough so that 
$$
\Vert g_n\Vert_\mu < \frac{\epsilon}{3}<1, \quad n\geq N.
$$
Then above yields, for all $n\geq N$, that
$$
\Vert f_n\textbf{1}_{f_n>K}\Vert_\mu < \frac{\epsilon}{3} + \Vert g\textbf{1}_{g>\frac{K}{2}}\Vert_\mu + \left( \int_X |g|^pd\mu\right)^{\frac{1}{p}}\left(\frac{2}{K}\right)^{\frac{1}{q}},
$$
and by choosing $K=\overline{K}$ such that 
$$
\Vert g\textbf{1}_{g>\frac{\overline{K}}{2}}\Vert_\mu < \frac{\epsilon}{3}
$$
and
$$
\left( \int_X |g|^pd\mu\right)^{\frac{1}{p}}\left(\frac{2}{\overline{K}}\right)^{\frac{1}{q}} <  \frac{\epsilon}{3}
$$
implies that 
$$
\Vert f_n\textbf{1}_{f_n>\overline{K}}\Vert_\mu <\epsilon
$$
for all $n\geq N$. Moreover, since $\Vert f_n\Vert_\mu < \infty$ for all $n$, it follows that for each $n$ there exists $K(n)$ such that 
$$
\Vert f_n\textbf{1}_{f_n>K(n)}\Vert_\mu < \epsilon.
$$
Thus it suffices to choose $K=\max(\overline{K},K(1),\ldots,K(N-1))$ to get
$$
\Vert f_n\textbf{1}_{f_n>K}\Vert_\mu < \epsilon.
$$
Finally, the last assertion follows from the well-known facts that pointwise convergence implies convergence in measure and convergence in measure together with the uniform integrability implies strong convergence in the norm $\Vert \cdot\Vert_\mu$.
\end{proof}
\begin{proposition}
\label{prop:lambda-rep}
Let $X$ be a solution to \eqref{eq:SDE} satisfying Assumption \ref{assumption:key} and $\tau_\epsilon$ be as in Proposition \ref{prop:lambda_n-rep}. Then for any $t\in[0,\tau_\epsilon]$ we have
$$
\Lambda(X_t) - \Lambda(X_0) = Y_t- Y_0.
$$
\end{proposition}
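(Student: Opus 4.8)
The plan is to pass to the limit $n\to\infty$ in the representation of Proposition \ref{prop:lambda_n-rep}. Fix $t\in[0,\tau_\epsilon]$ and $\theta\in(1-\alpha,\beta)$; almost surely
$$
\Lambda_n(X_t)-\Lambda_n(X_0) \weq \int_0^t \frac{\sigma(X_s)}{\sigma_n(X_s)}\,dY_s ,
$$
and I would treat the two sides separately. The left-hand side equals $\int_{X_0}^{X_t}\sigma_n^{-1}(y)\,dy$, and since Lemma \ref{lemma:approx} gives $0\le\sigma_n^{-1}\le\sigma^{-1}$, $\sigma_n^{-1}\to\sigma^{-1}$ at every point where $\sigma>0$ (hence Lebesgue almost everywhere, as $\sigma^{-1}$ is locally integrable), and $\sigma^{-1}$ is integrable on the compact interval with endpoints $X_0,X_t$, the dominated convergence theorem yields, pathwise, $\Lambda_n(X_t)-\Lambda_n(X_0)\to\Lambda(X_t)-\Lambda(X_0)$.

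For the right-hand side write $h_n(s)=\sigma(X_s)/\sigma_n(X_s)$, so that $0\le h_n\le 1$ and, because $\sigma(X_s)\ge\epsilon>0$ on $[0,\tau_\epsilon]$ and $\sigma_n(X_s)\to\sigma(X_s)$, we have $h_n\to 1$ pointwise. Since $Y\in W_{1-\theta,\infty}$, the bound \eqref{eq:ZSIntegralBound} shows that it suffices to prove $\Vert (h_n-1)\textbf{1}_{\cdot\le t}\Vert_{\theta,1}\to 0$; as $h_n-1$ is uniformly bounded and tends to $0$ pointwise, Propositions \ref{prop:remove-first-term} and \ref{prop:forget-indicator} reduce this to the convergence of the Gagliardo seminorm
$$
\int_0^t\int_0^t \frac{|h_n(s)-h_n(r)|}{|s-r|^{1+\theta}}\,ds\,dr \ \to\ 0 .
$$

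I would obtain this from the uniform integrability Lemma \ref{lemma:UI}, applied pathwise on $([0,t]^2,\mu)$ with $d\mu=|s-r|^{-1-\theta}\,ds\,dr$ to $\phi_n(s,r)=|h_n(s)-h_n(r)|$, which converges to $0$ pointwise. For the domination I would use the quotient estimate already exploited in Proposition \ref{prop:lambda_n-rep}: on $[0,\tau_\epsilon]$ one has $\sigma_n(X_s)\ge\epsilon$ and $\sigma(X_s)$ bounded, whence
$$
|h_n(s)-h_n(r)| \ \le\ (\epsilon^{-1}+C\epsilon^{-2})\,|\sigma(X_s)-\sigma(X_r)| \ +\ C\epsilon^{-2}\,|(\sigma_n-\sigma)(X_s)-(\sigma_n-\sigma)(X_r)| .
$$
The second summand plays the role of $g_n$: its $\mu$-norm is $C\epsilon^{-2}[(\sigma_n-\sigma)\circ X]_{\theta,1}$, which tends to $0$ by Proposition \ref{prop:key-estimate-approx}. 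The first summand is $g$, and here the crucial point is to gain an $L^p$ bound with $p>1$: by Proposition \ref{prop:key-estimate} with order $\theta/p$ and exponent $p$,
$$
\int_0^t\int_0^t \frac{|\sigma(X_s)-\sigma(X_r)|^p}{|s-r|^{1+\theta}}\,ds\,dr \weq [\sigma\circ X]_{\theta/p,p}^p \wles \infty
$$
almost surely, since the associated integrand is $|X_t-y|^{-\theta/\alpha}$, integrable under Assumption \ref{assumption:key} and Lemma \ref{lma:very-simple} because $\theta<\beta$. Thus $g\in L^p(\mu)$ for some $p>1$, and Lemma \ref{lemma:UI} gives $\Vert\phi_n\Vert_{L^1(\mu)}\to 0$, as desired. (If Proposition \ref{prop:key-estimate-approx} is available only in expectation, I would first extract an almost surely convergent subsequence; as the left-hand side converges along the whole sequence, the limiting identity is unaffected.)

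Combining the two limits gives $\int_0^t h_n\,dY_s\to Y_t-Y_0$ and hence $\Lambda(X_t)-\Lambda(X_0)=Y_t-Y_0$ on $[0,\tau_\epsilon]$. The main obstacle is the right-hand side: the integrand $h_n$ is controlled only in the fractional norm $W_{\theta,1}$, and its natural pointwise domination carries an $n$-dependent term, so plain dominated convergence is unavailable. The key is therefore to split off the vanishing contribution $(\sigma_n-\sigma)\circ X$ and to upgrade the mere $L^1(\mu)$ control of $\sigma\circ X$ to $L^p(\mu)$ with $p>1$, which is exactly what makes the uniform integrability Lemma \ref{lemma:UI} applicable.
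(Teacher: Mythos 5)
Your strategy is the paper's own: pass to the limit in Proposition \ref{prop:lambda_n-rep}, handle the left side by dominated convergence, reduce the right side via \eqref{eq:ZSIntegralBound} and Propositions \ref{prop:remove-first-term} and \ref{prop:forget-indicator} to convergence of a Gagliardo-type double integral, and conclude by splitting off the vanishing term $(\sigma_n-\sigma)\circ X$ (Proposition \ref{prop:key-estimate-approx}) and dominating the remainder by an $L^p$ function (Proposition \ref{prop:key-estimate}) so that Lemma \ref{lemma:UI} applies. The decomposition and the individual estimates are correct. However, there is one genuine flaw: you invoke Lemma \ref{lemma:UI} on the measure space $([0,t]^2,\mu)$ with $d\mu=|s-r|^{-1-\theta}\,ds\,dr$, and this measure is \emph{infinite} (it diverges along the diagonal, since $\theta>0$). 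Lemma \ref{lemma:UI} is only valid for finite measures: its last step (pointwise convergence implies convergence in measure; convergence in measure plus uniform integrability implies norm convergence) uses finiteness twice, and the statement itself fails on infinite measure spaces. Concretely, on $[1,\infty)$ with Lebesgue measure take $f_n(x)=x^{-1}\mathbf{1}_{[n,2n]}(x)$, $g(x)=x^{-1}$, $g_n\equiv 0$: all hypotheses of Lemma \ref{lemma:UI} hold ($g\in L^p$ for every $p>1$), $f_n\to 0$ pointwise, yet $\Vert f_n\Vert_{L^1}=\ln 2$ for all $n$. In your setting the analogous danger is mass escaping to the diagonal, and nothing in your argument excludes it. This is exactly why the paper formulates the application differently: it keeps the finite measure $d\mu=ds\,dr$ and absorbs the kernel $|s-r|^{-1-\theta}$ into the functions $f_n,g_n,g$; the price is that the $L^p(ds\,dr)$ condition on $g$ then involves the kernel raised to the power $p$, which forces the constraint $p\theta+p-1\le\beta$ on $p$. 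Your observation that at order $\theta/p$ and exponent $p$ the bound from Proposition \ref{prop:key-estimate} involves only $|X_t-y|^{-\theta/\alpha}$, so that any $p>1$ works, is correct --- but only in the weighted-measure formulation, which is precisely where the lemma breaks down.

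The good news is that your argument is repairable without changing its structure, because your dominating function $g=C|\sigma(X_s)-\sigma(X_r)|$ lies not only in $L^p(\mu)$ but also in $L^1(\mu)$: this is \eqref{eq:jelppi1}, i.e.\ Proposition \ref{prop:key-estimate} with $p=1$. This supplies the missing tightness near the diagonal: $\int_{\{|s-r|\le\delta\}} g\,d\mu\to 0$ as $\delta\to 0$ by dominated convergence, the contribution of $g_n$ there is at most $\Vert g_n\Vert_{L^1(\mu)}\to 0$, and on the complement $\{|s-r|>\delta\}$ the measure $\mu$ is finite, so Lemma \ref{lemma:UI} (equivalently, Vitali's theorem) applies legitimately. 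Either insert this two-region argument explicitly, or simply run the final step in the paper's formulation with $d\mu=ds\,dr$ and $p>1$ chosen so small that $p\theta+p-1\le\beta$.
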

\begin{proof}
Let $\sigma_n$ be as in Lemma \ref{lemma:approx} and $\theta\in(1-\beta,\beta)$. As $\sigma_n$ converges almost everywhere to $\sigma$ and $\sigma_n^{-1} \leq \sigma^{-1}$, it follows from the dominated convergence theorem that 
$$
\Lambda_n(x) = \int_0^x \frac{1}{\sigma_n(y)}dy \to \int_0^x \frac{1}{\sigma(y)}dy = \Lambda(x).
$$
Now trivially 
$$
Y_t - Y_0= \int_0^t d Y_s,
$$
and by Proposition \ref{prop:lambda_n-rep} we have 
$$
\Lambda_n(X_t) - \Lambda_n(X_0) = \int_0^t \frac{\sigma(X_s)}{\sigma_n(X_s)}dY_s.
$$
Thus, using pointwise convergence $\sigma_n(x)\to \sigma(x)$ and Proposition \ref{prop:remove-first-term}, it suffices to prove that, for any $t\leq \tau_\epsilon$, we have
$$
\left[\frac{\sigma(X_\cdot)\textbf{1}_{\cdot \leq t}}{\sigma_n(X_\cdot)}-1 \right]_{\theta,1} \to 0.
$$
Moreover, again by Proposition \ref{prop:forget-indicator} it suffices to study the integral over the region $0\leq s,r\leq T$ and drop the indicator terms. 
We have, again by using the fact that $\sigma^{-1}_n(X_s)$, $\sigma_n(X_s)$, and $\sigma(X_s)$ are almost surely bounded by some (random) constant $C$, 
\begin{eqnarray}\label{eq24}
\left| \frac{\sigma(X_s)}{\sigma_n(X_s)} - \frac{\sigma(X_r)}{\sigma_n(X_r)} \right | &=& \left| \frac{ \sigma(X_s) \sigma_n(X_r) - \sigma(X_r) \sigma_n(X_s)}{\sigma_n(X_s) \sigma_n(X_r)} \right| \nonumber \\
& \le & C  | \sigma(X_s)\sigma_n(X_r) - \sigma(X_r) \sigma_n(X_s)  | \nonumber \\
& \le & C ( | \sigma(X_s) \big |  | \sigma_n(X_r) - \sigma_n(X_s)  | +    | \sigma_n(X_s) \big |  | \sigma(X_s) - \sigma(X_r)  |) \nonumber \\
& \le & C \left( | \sigma_n(X_r) - \sigma_n(X_s) \big | + |  \sigma(X_s) -  \sigma(X_r)  | \right) \nonumber \\
& \le & C \left(| \sigma_n(X_r) - \sigma_n(X_s) -\sigma(X_r)+\sigma(X_s)\big | + 2|  \sigma(X_s) -  \sigma(X_r)  | \right) \nonumber.
\end{eqnarray}
Let $d\mu=dsdr$. Choose next $p>1$ small enough such that $p\tilde\theta = p\theta +p - 1 \leq \beta$. 
By Proposition \ref{prop:key-estimate}, we have 
$$
\int_0^t \int_0^t |\sigma(X_s)-\sigma(X_r)|^p|s-r|^{-\theta p -p}dsdr \leq C \int_0^T |X_t- y|^{-\frac{p\tilde\theta}{\alpha}}dt,
$$
which is finite almost surely by Lemma \ref{lma:very-simple}. Thus
$g(s,r)=|\sigma(X_s)-\sigma(X_r)||s-r|^{-\theta-1} \in L^p(\mu)$ for our choice of $p$. Moreover, for 
$$
g_n(s,r) =\big| \sigma_n(X_r) - \sigma_n(X_s) -\sigma(X_r)+\sigma(X_s)\big ||s-r|^{-\theta-1}
$$
we have $\Vert g_n\Vert_\mu \to 0$ by Proposition \ref{prop:key-estimate-approx}. Thus using Lemma \ref{lemma:UI} with 
$$
f_n(s,r) = C^{-1}\left| \frac{\sigma(X_s)}{\sigma_n(X_s)} - \frac{\sigma(X_r)}{\sigma_n(X_r)} \right ||s-r|^{-\theta-1} 
$$
together with the fact $f_n \to 0$ pointwise we get
$$
\int_0^t \int_0^t C^{-1}_\sigma\left| \frac{\sigma(X_s)}{\sigma_n(X_s)} - \frac{\sigma(X_r)}{\sigma_n(X_r)} \right ||s-r|^{-\theta-1}ds dr \to 0
$$
which implies 
$$
\int_0^t \int_0^t \left| \frac{\sigma(X_s)}{\sigma_n(X_s)} - \frac{\sigma(X_r)}{\sigma_n(X_r)} \right ||s-r|^{-\theta-1}ds dr \to 0.
$$
This concludes the proof.
\end{proof}
We are now ready to prove Theorem \ref{thm:uniqueness}. 
\begin{proof}[Proof of Theorem \ref{thm:uniqueness}]
We begin by proving that $\tau$ is uniquely defined. 
Fix $\epsilon>0$ and let $X$ and $\tilde{X}$ be arbitrary solutions and let $\tau_\epsilon$ and $\tilde{\tau}_\epsilon$ be the corresponding stopping times defined in Proposition \ref{prop:lambda_n-rep}. Suppose $\tau_\epsilon < \tilde{\tau}_{\epsilon}$. Then, by Proposition \ref{prop:lambda-rep}, we have 
$$
\Lambda(X_t) - \Lambda(X_0) = Y_t = \Lambda(\tilde{X}_t)-\Lambda(X_0)
$$
on $t\in[0,\tau_\epsilon]$. Since $\Lambda$ has an inverse and $X$ and $\tilde{X}$ are H\"older continuous, it follows that actually $X=\tilde{X}$ on $t\in[0,\tau_\epsilon]$. Consequently, $\tilde{\tau}_{\epsilon} = \tau_\epsilon$ by the very definition. Furthermore, as, for any solution $X$, the mapping $\epsilon\mapsto \tau_\epsilon$ is decreasing, we obtain $\tau_\epsilon \to \tau$. Since for any $\epsilon>0$ the random time $\tau_\epsilon$ is uniquely defined, it follows that also $\tau$ is uniquely defined. Then H\"older continuity of $X$ implies that also the solution is unique up to $\tau$. Finally, the last assertion follows from the fact that $\tau =\infty$ whenever $\sigma(x)\neq 0$. 
\end{proof}
\begin{proof}[Proof of Corollary \ref{cor:uniqueness}]
Since $\sigma$ is of locally bounded variation and $Z_t = \Lambda^{-1}(\Lambda(X_0)+Y_t-Y_0)$ satisfies Assumption \ref{assumption:key}, it follows from Theorem \ref{the:Ito} together with the proof of Theorem \ref{thm:existence} that $Z$ is one solution. The uniqueness now follows from Theorem \ref{thm:uniqueness}.
\end{proof}
\begin{proof}[Proof of Corollary \ref{cor:uniqueness2}]
It is straightforward to check that if $Y_t$ has a density $p_t(y)$, then 
$$
Z_t = \Lambda^{-1}(\Lambda(X_0)+Y_t-Y_0)
$$
has a density $\tilde{p}_t(y)$ given by
$$
\tilde{p}_t(y) = p_t(\Lambda(y)-\Lambda(X_0)+Y_0)\Lambda'(y).
$$
Since $\Lambda'(y) = \frac{1}{\sigma(y)} \leq C$, we observe that 
$$
\sup_{y\in\R}\tilde{p}_t(y) \in L^1([0,T])
$$
and consequently, the solution $Z_t$ satisfies Assumption \ref{assumption:key}. The uniqueness then follows from Corollary \ref{cor:uniqueness}.
\end{proof}

\subsection*{Acknowledgements}

S. Torres is partially supported by the Project Fondecyt N. 1171335. L. Viitasaari wishes to thank Vilho, Yrj\"o, and Kalle V\"ais\"al\"a foundation for financial support.

\end{document}